\newcommand{\bfs}{\boldsymbol}
\newtheorem{theorem}{Theorem}[section]
\newtheorem{corollary}[theorem]{Corollary}
\newtheorem{lemma}[theorem]{Lemma}
\newtheorem*{fact*}{Fact}
\newtheorem{proposition}[theorem]{Proposition}
\theoremstyle{definition}
\newtheorem*{claim}{Claim}
\numberwithin{equation}{section}
\newcommand{\Z}{\mathbb Z}
\newcommand{\C}{\mathbb C}
\newcommand{\A}{\mathbb A}
\newcommand{\F}{\mathbb F}
\newcommand{\K}{\mathbb K}
\newcommand{\fq}{\F_{\hskip-0.7mm q}}
\newcommand{\cfq}{\overline{\F}_{\hskip-0.7mm q}}
\def\ifm#1#2{\relax \ifmmode#1\else#2\fi}
\newcommand{\klk}    {\ifm {,\ldots,} {$,\ldots,$}}
\begin{document}

\title[Average--case complexity of the Euclidean Algorithm]{Average--case complexity of the
Euclidean algorithm with a fixed polynomial over a finite field}
\author[N. Gim\'enez]{
Nardo Gim\'enez${}^{1}$}
\author[G. Matera]{
Guillermo Matera${}^{1,2,3}$}
\author[M. P\'erez]{
Mariana P\'erez${}^{3,4}$}
\author[M. Privitelli]{
Melina Privitelli${}^{3,5}$}

\address{${}^{1}$
Universidad Nacional de Gene\-ral Sarmiento, Instituto del
Desarrollo Humano, J.M. Guti\'errez 1150 (B1613GSX) Los Polvorines,
Buenos Aires, Argentina} \email{agimenez@ungs.edu.ar,
gmatera@ungs.edu.ar}
\address{${}^{2}$Universidad de Buenos Aires, Facultad de Ciencias
Exactas y Naturales, Departamento de Matem\'atica, Ciudad
Universitaria, Pabell\'on I (1428) Buenos Aires, Argentina}
\address{${}^{3}$ Consejo Nacional de Investigaciones
Cient\'ificas y T\'ecnicas (CONICET),
Ar\-gentina}%\email{mprivitelli@conicet.gov.ar}
\address{${}^{4}$
Universidad Nacional de Hurlingham, Instituto de Tecnolog\'ia e
Ingenier\'ia, Av. Gdor. Vergara 2222 (B1688GEZ), Villa Tesei, Buenos
Aires, Argentina} \email{mariana.perez@unahur.edu.ar}
\address{${}^{5}$Universidad Nacional de Gene\-ral Sarmiento, Instituto de Ciencias,
J.M. Guti\'errez 1150
(B1613GSX) Los Polvorines, Buenos Aires, Argentina}
\email{mprivite@ungs.edu.ar}

\thanks{The authors were partially supported by the grants
PIP CONICET 11220130100598 and PIO CONICET-UNGS 14420140100027}

\keywords{Finite fields, rational points, Euclidean algorithm, Schur
functions, resultant, average--case complexity}%

\date{\today}%

\begin{abstract}
We analyze the behavior of the Euclidean algorithm applied to pairs
$(g,f)$ of univariate nonconstant polynomials over a finite field
$\fq$ of $q$ elements when the highest-degree polynomial $g$ is
fixed. Considering all the elements $f$ of fixed degree, we
establish asymptotically optimal bounds in terms of $q$ for the
number of elements $f$ which are relatively prime with $g$ and for
the average degree of $\gcd(g,f)$. The accuracy of our estimates is
confirmed by practical experiments. We also exhibit asymptotically
optimal bounds for the average-case complexity of the Euclidean
algorithm applied to pairs $(g,f)$ as above.
\end{abstract}

\maketitle

%\tableofcontents
%
%----------------------------------------------------------------------
%----------------------------------------------------------------------
%----------------------------------------------------------------------
%----------------------------------------------------------------------
%----------------------------------------------------------------------
%----------------------------------------------------------------------
%----------------------------------------------------------------------
%----------------------------------------------------------------------
%
\section{Introduction}
Let $\fq$ be the finite field of $q$ elements, where $q$ is a prime
power, let $T$ be an indeterminate over $\fq$ and $\fq[T]$ the ring
of univariate polynomials in $T$ with coefficients in $\fq$. In this
paper we are concerned with the polynomial gcd problem for elements
of $\fq[T]$, namely the problem of computing the greatest common
divisor of two nonzero polynomials in $\fq[T]$.

The fundamental computational tool for this problem is the Euclidean
algorithm, and many variants of it are known in the literature (see,
e.g., \cite{GaGe99}). It is well-known that the Euclidean algorithm
in $\fq[T]$ requires a number of polynomial divisions which is
linear in the degree of the input polynomials. In particular, we are
interested in its average-case complexity, which has been the
subject of several papers. The paper \cite{MaGa90} establishes the
average-case complexity of the Euclidean algorithm and some variants
of it based on explicit counting. In \cite{Norton89}, the
average-case complexity of variants of the Euclidean algorithm is
considered using generating functions. Finally, \cite{LhVa08} and
\cite{BeNaNaVa14} analyze the average-case complexity and related
costs of the Euclidean algorithm and variants using tools of
analytic combinatorics such as bivariate generating functions.

All these results consider the average, for fixed degrees $e>d>0$,
over the set of pairs $(g,f)\in\fq[T]\times\fq[T]$ with $g$ monic of
degree $e$ and $f$ either of degree at most $d$, or of degree less
than $e$, assuming the uniform distribution of pairs. Nevertheless,
there are important tasks which rely heavily on the computation of
gcd's and lie outside the scope of these analyses. For example, a
critical step in the standard algorithm for finding the roots in
$\fq$ of a polynomial $f\in\fq[T]$ with $\deg f<q$ consists of
computing $\gcd(T^q-T,f)$ (see, e.g., \cite{GaGe99}). As the first
element in the pair $(T^q-T,f)$ is a fixed polynomial, average-case
analyses as before do not contribute to the analysis of the
complexity of this problem.

In this paper we consider, for fixed degrees $e>d>0$ and a fixed
(arbitrary) $g\in\fq[T]$ monic of degree $e$, the average-case
complexity of the Euclidean algorithm over the set of pairs $(g,f)$
with $f\in\fq[T]$ monic of degree $d$, endowed with the uniform
probability. We shall be interested in the case $q\gg e$; in this
sense, all our results may be regarded as asymptotic in $q$.

We discuss a number of issues concerning this case of the Euclidean
algorithm. Our first result shows that the average degree
$E[\mathcal{X}_g]$ of $\gcd(g,f)$ for a random element $f$ of
$\fq[T]$, monic of degree $d$, decreases fast as $q$ tends to
infinity. Further, we prove that the decrease rate depends on the
factorization pattern of $g$ %: %if $g$ has factorization pattern
%$(\lambda_1,\ldots,\lambda_e)$ with
%$\lambda_1+2\lambda_2+\cdots+e\lambda_e=e$, and $k\le d$ is the
%least index with $\lambda_k>0$, then the following estimate holds
(see Theorem \ref{teo: average degree mcd} for a precise statement):
\begin{equation}\label{eq: average degree - intro}
\bigg|E[\mathcal{X}_g]-\frac{k\lambda_k^*}{q^k}\bigg|=
\mathcal{O}\bigg(\frac{1}{q^{k+1}}\bigg),
\end{equation}
where $\lambda_k^*$ denotes the number of distinct monic irreducible
factors of $g$ of degree $k$. The average degree of the gcd of a
random pair of elements in $\fq[T]$ of degrees $e$ and $d$ as above
is $(1-q^{-d})/(q-1)$ (see \cite[Corollary 2.6]{MaGa90}). Our
result, although not as precise as the latter, confirms that in our
case the average degree of the gcd is $\mathcal{O}(q^{-1})$ (for
fixed $d,e$).

We also show that, with high probability, $g$ and a random monic
polynomial $f$ of $\fq[T]$ of degree $d$ are relatively prime. In
fact, we have the following estimate for the probability
$\mathcal{P}_0$ that $\gcd(g,f)=1$ (see Theorem \ref{teo: prob
coprime}):
\begin{equation}\label{eq: average prob rel prime - intro}
\bigg|\mathcal{P}_0-\bigg(1 - \frac{\lambda_k^*}{q^k}\bigg)\bigg|=
\mathcal{O}\bigg(\frac{1}{q^{k+1}}\bigg).
\end{equation}
This may be compared with the probability $1-1/q$ that a random pair
of elements of $\fq[T]$ of degrees $e$ and $d$ are relatively prime
(see, e.g., \cite[Proposition 2.4]{MaGa90}).

Finally, we analyze the average number $E[t_g^{\sf div}]$,
$E[t_g^{\div}]$ and $E[t_g^{-, \times}]$ of polynomial divisions,
divisions in $\fq$, and additions/multiplications in $\fq$,
performed by the Euclidean algorithm. We have the following bounds
(see Theorem \ref{teo: average cost mcd}):
\begin{align*}
\bigg|\frac{E[t^{\sf div}_g]}{d+1}-1\bigg| \le\frac{de}{q},\qquad
\bigg|\frac{E[t^{\div}_g]}{e+d+1}-1\bigg|\le \frac{de}{q}, \qquad
\bigg|\frac{E[t^{-, \times}_g]}{de}-1\bigg|\le \frac{de}{q}.
\end{align*}
The main terms in these bounds agree with those in the corresponding
ones for random pairs of polynomials of degree $e$ and $d$ with
$e>d$, according to \cite[Theorem 2.1]{MaGa90}.

Our approach relies on estimating the number of polynomials $f$ for
which $\gcd(g,f)$ has a given degree. For this purpose we use
classical tools of elimination theory, which are combined with
bounds on the number of common zeros with coordinates in $\fq$ of
multivariate polynomials defined over the algebraic closure $\cfq$
of $\fq$. Another critical point is a lower bound on the number of
polynomials $f$ for which the Euclidean algorithm performs the
highest possible number of steps. Such a lower bound relies on a
description of certain coefficients of the sequence of quotients and
remainders determined by the Euclidean algorithm in terms symmetric
functions, following \cite{Lascoux03}. Combining such a description
with upper bounds on the number of zeros with coordinates in $\fq$
of multivariate polynomials with coefficients in $\fq$, we are able
to control the number of polynomials $f$ for which $\gcd(g,f)$ has a
given degree. Our results are then expressed in terms of these
quantities.

%We briefly discuss the terms arising in the $\mathcal{O}$--notation
%of our estimates. Such terms are essentially determined by the
%(classical) upper bound \eqref{eq: upper bound -- affine gral} for
%the number of rational zeros with coefficients in $\fq$ of certain
%multivariate polynomials arising in the remainder sequence of the
%Euclidean algorithm applied to the fixed polynomial $g$ and a
%``generic'' polynomial $f$ of degree $d$ (see Section \ref{sec:
%specialization} for the definition of these polynomials). A possible
%way to improve such $\mathcal{O}$--terms consists of studying the
%geometry of the set of zeros with coordinates in the algebraic
%closure of $\fq$ of the occurring multivariate polynomials. This
%would allow us to apply more precise estimates on the number of
%zeros with coordinates in $\fq$ of the occurring multivariate
%polynomials as those in, e.g., \cite{CaMa06}, which would lead, for
%example, to a bound of the form $E[\mathcal{X}_g] \leq
%\frac{d(d+1)}{2q}+\mathcal{O}(q^{-3/2})$ in \eqref{eq: average
%degree - intro}. We plan to consider these questions in a
%forthcoming paper.

The paper is organized as follows. In Section \ref{sec: basic
notions and notations} we recall the description of remainders and
quotients arising in the Euclidean algorithm applied to a
``generic'' pair of polynomials of given degrees in terms of
symmetric functions. In Section \ref{sec: degree bounds} we use this
machinery to estimate the degrees of the leading coefficients of the
remainders in the generic case and we consider the behavior of the
Euclidean algorithm under specializations. In Section \ref{sec:
average degree} we estimate on the number of polynomials $f$ for
which $\gcd(g,f)$ has a given degree, which are used to prove
\eqref{eq: average degree - intro} and \eqref{eq: average prob rel
prime - intro}. In Section \ref{sec: average case analysis} we use
the results of Sections \ref{sec: degree bounds} and \ref{sec:
average degree} to establish the results on the average-case
complexity. Finally, in Section \ref{section: simulations} we report
on some simulations we perform which show the behavior predicted by
the theoretical estimates \eqref{eq: average degree - intro} and
\eqref{eq: average prob rel prime - intro}.
%
%----------------------------------------------------------------------
%----------------------------------------------------------------------
%----------------------------------------------------------------------
%----------------------------------------------------------------------
%----------------------------------------------------------------------
%----------------------------------------------------------------------
%----------------------------------------------------------------------
%----------------------------------------------------------------------
%
\section{Basic notions and notations}
\label{sec: basic notions and notations}
%
%In this section we collect the basic definitions and facts of
%algebraic geometry that we need in the sequel. We use standard
%notions and notations which can be found in, e.g., \cite{Kunz85},
%\cite{Shafarevich94}.
%
Let $\fq$ be the finite field of $q$ elements and $\cfq$ its
algebraic closure. Let $X_1,\ldots,X_n$ be indeterminates over
$\cfq$. For $\K=\fq$ or $\K=\cfq$, we denote by $\K[X_1,\ldots,X_n]$
the ring of multivariate polynomials in $X_1,\ldots,X_n$ and
coefficients in $\K$. %Given $F\in \K[X_1,\ldots,X_n]$, by
%$\mathcal{V}(F)$ we denote the set of zeros of $F$ in the affine
%$n$--dimensional space $\A^n:=\cfq{\!}^n$, namely
%$\mathcal{V}(F):=\{(x_1,\ldots,x_n)\in\cfq{\!}^n:F(x_1,\ldots,x_n)=0\}$. %by $\mathcal{V}(F)$
%and by $\Pp^r$ the
%projective $r$--dimensional space over $\cfq{\!}^{r+1}$.
By $\A^n$ we denote the affine $n$--dimensional space
$\A^n:=\cfq{\!}^n$, endowed with its Zariski topology over $\K$, for
which a closed set is the zero locus of a set of polynomials of
$\K[X_1,\ldots, X_n]$.
%, or of a set of homogeneous polynomials of
%$\K[X_0,\ldots, X_r]$.
%
%A subset $V\subset \Pp^r$ is a {\em projective variety defined over}
%$\K$ (or a projective $\K$--variety for short) if it is the set of
%common zeros in $\Pp^r$ of homogeneous polynomials $F_1,\ldots, F_m
%\in\K[X_0 ,\ldots, X_r]$.
A subset $V\subset \A^r$ is an {\em affine variety of
%$\A^r$
defined over} $\K$ (or an affine $\K$--variety) if it is the set of
common zeros in $\A^n$ of polynomials $F_1,\ldots, F_{m} \in
\K[X_1,\ldots, X_n]$. %We think a projective or affine $\K$--variety
%to be equipped with the induced Zariski topology.
We shall denote by $\{F_1=0,\ldots, F_m=0\}$ or $V(F_1,\ldots,F_m)$
the affine $\K$--variety consisting of the common zeros of
$F_1,\ldots, F_m$.
%
%In the remaining part of this section, unless otherwise stated, all
%results referring to varieties in general should be understood as
%valid for both projective and affine varieties.
%

A $\K$--variety $V$ is {\em irreducible} if it cannot be expressed
as a finite union of proper $\K$--subvarieties of $V$.
%
%Further, $V$ is {\em absolutely irreducible} if it is
%$\cfq$--irreducible as a $\cfq$--variety.
Any $\K$--variety $V$ can be expressed as an irredundant union
$V=\mathcal{C}_1\cup \cdots\cup\mathcal{C}_s$ of irreducible
%(absolutely irreducible)
$\K$--varieties, unique up to reordering, called the {\em
irreducible} %({\em absolutely irreducible})
$\K$--{\em components} of $V$.
%
%For a $\K$--variety $V$ contained in $\Pp^r$ or $\A^r$, its {\em
%defining ideal} $I(V)$ is the set of polynomials of $\K[X_0,\ldots,
%X_r]$, or of $\K[X_1,\ldots, X_r]$, vanishing on $V$. The {\em
%coordinate ring} $\K[V]$ of $V$ is the quotient ring
%$\K[X_0,\ldots,X_r]/I(V)$ or $\K[X_1,\ldots,X_r]/I(V)$. The {\em
%dimension} $\dim V$ of $V$ is the length $n$ of a longest chain
%$V_0\varsubsetneq V_1 \varsubsetneq\cdots \varsubsetneq V_n$ of
%nonempty irreducible $\K$--varieties contained in $V$. %A
%$\K$--variety $V$ is called {\em equidimensional} if all the
%irreducible $\K$--components of $V$ are of the same dimension. In
%such a case,
We say that $V$ has {\em pure dimension} $r$ if every irreducible
$\K$--component of $V$ has dimension $r$. A $\K$--variety of %$\Pp^r$ or
$\A^n$ of pure dimension $n-1$ is called a $\K$--{\em hypersurface}.
A $\K$--hypersurface of %$\Pp^r$ (or
$\A^n$ can also be described as the set of zeros of a single nonzero
polynomial of $\K[X_1,\ldots, X_n]$.% (or of $\K[X_1,\ldots, X_r]$).

The {\em degree} $\deg V$ of an irreducible $\K$--variety $V$ is the
maximum of the cardinality $|V\cap L|$ of $V\cap L$, considering all
the linear spaces $L$ of codimension $\dim V$ such that $|V\cap
L|<\infty$. More generally, following \cite{Heintz83} (see also
\cite{Fulton84}), if $V=\mathcal{C}_1\cup\cdots\cup \mathcal{C}_s$
is the decomposition of $V$ into irreducible $\K$--components, we
define the degree of $V$ as
$$\deg V:=\sum_{i=1}^s\deg \mathcal{C}_i.$$
The degree of a $\K$--hypersurface $V$ is the degree of a polynomial
of minimal degree defining $V$. In particular, the degree of a linear
variety is equal to 1. %Another property is that the degree
Let $\A^n(\fq)$ be the $n$--dimensional $\fq$--vector space $\fq^n$.
For
%For a projective variety $V\subset\Pp^r$ or
an affine variety $V\subset\A^n$, %Further, $\mathcal{V}(F)(\fq)$ is
the set of $\fq$--rational points $V(\fq)$ of $V$ is defined as %, that is,
$V(\fq):=V\cap\A^n(\fq)$.
%in the projective case and $V(\fq):=V\cap \A^r(\fq)$ in the affine
For an affine variety $V\subset\A^n$ of dimension $r$ and degree
$d\ge 0$, we have the following bound (see, e.g., \cite[Lemma
2.1]{CaMa06}):
\begin{equation}\label{eq: upper bound -- affine gral}
   |V(\fq)|\leq d\, q^r.
\end{equation}

\subsection{Symmetric functions and the Euclidean algorithm}
Next we gather the terminology and results we will use concerning
the description of the Euclidean algorithm in terms of symmetric
functions, following \cite{Lascoux03}.

We call a finite set of indeterminates $\mathbb{A}$ over $\fq$ an
\emph{alphabet} and denote its cardinality by $|\mathbb{A}|$. The
\textit{elementary symmetric  functions}  $\Lambda^i(\mathbb{A})$
and the \textit{complete functions} $S^i(\mathbb{A})$ $(i\geq 0)$
are defined by means of the following identities of formal power
series in the variable $z$:
$$
\prod_{a \in \mathbb{A}}(1+za)=\sum_{i\geq 0} \Lambda^i(\mathbb{A})z^i,
\quad \prod_{a \in \mathbb{A}}\frac{1}{1-za}=\sum_{i\geq 0} S^i(\mathbb{A})z^i.
$$
We further define $\Lambda^i(\mathbb{A}):=0$ and
$S^i(\mathbb{A}):=0$ for $i<0$. Observe that
$\Lambda^i(\mathbb{A})=0$ if $i> |\mathbb{A}|$. Writing
$\mathbb{A}+\mathbb{B}$ for the disjoint union of two alphabets
$\mathbb{A}$ and $\mathbb{B}$, we have the following \textit{Cauchy
formulas}:
\begin{gather}\label{eq: cauchy_formula}
\Lambda^i(\mathbb{A}+\mathbb{B})=\sum_{j+k=i}\Lambda^j(\mathbb{A})\Lambda^k(\mathbb{B}),
\quad
S^i(\mathbb{A}+\mathbb{B})=\sum_{j+k=i}S^j(\mathbb{A})S^k(\mathbb{B}),
\end{gather}
Define $S^i(\mathbb{A}-\mathbb{B})$ $(i\geq 0)$ by means of the
identity
$$
\frac{\prod_{b \in \mathbb{B}} (1-zb)}{\prod_{a\in \mathbb{A}}
(1-za)}= \sum_{i\geq 0} S^i(\mathbb{A}-\mathbb{B})z^i,
$$
and set $S^i(\mathbb{A}-\mathbb{B}):=0$ for $i < 0$. Define
$S^i(-\mathbb{A}):=(-1)^i\Lambda^i(\mathbb{A})$ for any integer $i$.
Thus, besides \eqref{eq: cauchy_formula} we have
\begin{equation}\label{eq: cauchy_formula_difference}
S^i(\mathbb{A}-\mathbb{B})=\sum_{j+k=i}S^j(\mathbb{A})S^k(-\mathbb{B}).
\end{equation}
We shall express polynomials using this terminology. Indeed, let
$n:=|\mathbb{A}|$ and identify a single indeterminate $T$ with the
alphabet $\{T\}$. Since $S^i(T)=T^i$ for $i\geq 0$, according to
\eqref{eq: cauchy_formula_difference} we have that
$$
S^{n}(T-\mathbb{A})=\sum_{i=0}^{n}S^{n-i}(-\mathbb{A})T^i
$$
is the polynomial in $T$ having $\mathbb{A}$ as its set of roots.
%
%------------------------------------------------------------------
%------------------------------------------------------------------
%------------------------------------------------------------------
%------------------------------------------------------------------
%
\subsection{Schur functions}
Let $\mathbb{A}$ be an alphabet of cardinality $n$. Given $J:=(j_1,
\dots, j_n)\in \mathbb{Z}_{\geq 0}^n$, the \emph{Schur function}
$S_J(\mathbb{A})$ is defined as the determinant
$$
S_J(\mathbb{A}):=\det \big(S^{j_k+k-h}(\mathbb{A})\big)_{1\le h,k\le
n}.
$$
In other words,
$$
S_J(\mathbb{A})= \det \left(
 \begin{array}{cccc}
 S^{j_1}(\mathbb{A}) & S^{j_2+1}(\mathbb{A}) &\!\!\! \cdots
 &\!\!\! S^{j_n+n-1}(\mathbb{A}) \\
 S^{j_1-1}(\mathbb{A}) & S^{j_2}(\mathbb{A}) & \ddots & \vdots \\
 \vdots & \vdots & \cdots &  S^{j_n+1}(\mathbb{A})\\
\!\!\! S^{j_1-n+1}(\mathbb{A})\!\!\! & S^{j_2-n+2}(\mathbb{A}) & \cdots & S^{j_n}(\mathbb{A})\\
 \end{array}
 \right).
$$
Given $n$, two sets of alphabets $\{\mathbb{A}_1, \dots,
\mathbb{A}_n\}$, $\{\mathbb{B}_1, \dots, \mathbb{B}_n\}$ and $J\in
\mathbb{Z}_{\geq 0}^n$, we have the \emph{multi-Schur function}
$$
S_J(\mathbb{A}_1-\mathbb{B}_1, \dots,
\mathbb{A}_n-\mathbb{B}_n):=\det
\big(S^{j_k+k-h}(\mathbb{A}_k-\mathbb{B}_k)\big)_{1\le h,k\le n},
$$
namely
\begin{align*}
&S_J(\mathbb{A}_1-\mathbb{B}_1, \dots, \mathbb{A}_n-\mathbb{B}_n)=\\
&\qquad\qquad\det \left(
 \begin{array}{cccc}
 S^{j_1}(\mathbb{A}_1-\mathbb{B}_1) & S^{j_2+1}(\mathbb{A}_2-\mathbb{B}_2) &\!\!\! \cdots
 &\!\!\! S^{j_n+n-1}(\mathbb{A}_n-\mathbb{B}_n) \\
 S^{j_1-1}(\mathbb{A}_1-\mathbb{B}_1) & S^{j_2}(\mathbb{A}_2-\mathbb{B}_2) & \ddots & \vdots \\
 \vdots & \vdots & \cdots &  S^{j_n+1}(\mathbb{A}_n-\mathbb{B}_n)\\
\!\!\! S^{j_1-n+1}(\mathbb{A}_1-\mathbb{B}_1)\!\!\! &
S^{j_2-n+2}(\mathbb{A}_2-\mathbb{B}_2)
& \cdots & S^{j_n}(\mathbb{A}_n-\mathbb{B}_n)\\
 \end{array}
 \right).
\end{align*}
Finally, given $H\in \mathbb{Z}_{\geq 0}^p$ and $K\in
\mathbb{Z}_{\geq 0}^q$, and alphabets $\mathbb{A}$, $\mathbb{B}$,
$\mathbb{C}$, $\mathbb{D}$, we shall consider the multi-Schur
function $S_{H;K}(\mathbb{A}-\mathbb{B}; \mathbb{C}-\mathbb{D})$
with index $(H;K)$, the concatenation of $H$ and $K$, and alphabets
$\mathbb{A}_1=\mathbb{A}, \dots, \mathbb{A}_p=\mathbb{A}$,
$\mathbb{B}_1=\mathbb{B}, \dots, \mathbb{B}_p=\mathbb{B}$,
$\mathbb{A}_{p+1}=\mathbb{C}, \dots, \mathbb{A}_{p+q}=\mathbb{C}$,
$\mathbb{B}_{p+1}=\mathbb{D}, \dots, \mathbb{B}_{p+q}=\mathbb{D}$.
When a tuple $J=(j_1, \dots, j_n)\in \mathbb{Z}_{\geq 0}^n$ with
$j_i=m$ for $1\le i \le n$ appears as an index in a Schur function,
we denote it as $m^n$. For example, for $\ell\in \mathbb{Z}_{\geq
0}$, we have
$$
S_{m^n;\, \ell}(\mathbb{A}-\mathbb{B}; T)= \det \left(
 \begin{array}{cccc}
 S^{m}(\mathbb{A}-\mathbb{B})  &\!\!\! \cdots
 &\!\!\! S^{m+n-1}(\mathbb{A}-\mathbb{B}) & T^{\ell +n} \\
 S^{m-1}(\mathbb{A}-\mathbb{B}) &  \ddots & \vdots & \vdots \\
\!\!\! \vdots\!\!\! &  \ddots & S^{m}(\mathbb{A}-\mathbb{B}) & T^{\ell+1}\\
\!\!\! S^{m-n}(\mathbb{A}-\mathbb{B})\!\!\! &  \cdots &
S^{m-1}(\mathbb{A}-\mathbb{B}) & T^{\ell}
 \end{array}
 \right).
$$
We have the following result (see \cite[equation
(1.4.8)]{Lascoux03}).
\begin{lemma}\label{lemma: Lascoux 148}
Let $J\in \mathbb{Z}_{\geq0}^n$, $k\in \mathbb{Z}_{\geq0}$,
$\mathbb{A}$, $\mathbb{B}$ alphabets and $T$ an indeterminate. Then
 $$
 S_{J}(\mathbb{A}-\mathbb{B}-T)T^k=S_{J;\, k}(\mathbb{A}-\mathbb{B};T).
 $$
\end{lemma}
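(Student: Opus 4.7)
The plan is to exploit the generating-function definition of $S^i(\mathbb{A}-\mathbb{B}-T)$ to derive a simple three-term relation, and then read off the lemma from a cofactor expansion after one round of row operations on the $(n+1)\times(n+1)$ matrix defining $S_{J;k}(\mathbb{A}-\mathbb{B};T)$.

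First, I would record the following identity, which is the whole engine of the proof: regarding $T$ as a one-element alphabet, the generating series for $\mathbb{A}-\mathbb{B}-T$ is obtained from that of $\mathbb{A}-\mathbb{B}$ by multiplying by $(1-zT)$, so
\[
\sum_{i\ge 0}S^i(\mathbb{A}-\mathbb{B}-T)\,z^i
=(1-zT)\sum_{i\ge 0}S^i(\mathbb{A}-\mathbb{B})\,z^i,
\]
whence
\[
S^i(\mathbb{A}-\mathbb{B}-T)=S^i(\mathbb{A}-\mathbb{B})-T\cdot S^{i-1}(\mathbb{A}-\mathbb{B})
\]
for every $i\in\mathbb{Z}$ (with the convention $S^i=0$ for $i<0$).

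Next, write $M$ for the $(n+1)\times(n+1)$ matrix whose determinant is $S_{J;k}(\mathbb{A}-\mathbb{B};T)$; its $(h,r)$-entry is $S^{j_r+r-h}(\mathbb{A}-\mathbb{B})$ for $1\le r\le n$, and $T^{k+n+1-h}$ in the last column $r=n+1$. I would perform the simultaneous row operations
\[
\text{(new row }h\text{)} := \text{(row }h\text{)} - T\cdot\text{(row }h+1\text{)} \qquad (1\le h\le n),
\]
leaving row $n+1$ unchanged. These operations amount to left-multiplication of $M$ by an upper unitriangular matrix, so the determinant is unchanged. Using the identity displayed above, each new entry in the first $n$ columns and first $n$ rows becomes exactly $S^{j_r+r-h}(\mathbb{A}-\mathbb{B}-T)$, while each entry $T^{k+n+1-h}-T\cdot T^{k+n-h}$ in the last column (for $h\le n$) collapses to $0$. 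The bottom row is untouched, so its last entry remains $T^k$.

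Finally, expanding along the last column of the transformed matrix—which has a single nonzero entry $T^k$ in position $(n+1,n+1)$—yields
\[
S_{J;k}(\mathbb{A}-\mathbb{B};T)=T^k\cdot\det\bigl(S^{j_r+r-h}(\mathbb{A}-\mathbb{B}-T)\bigr)_{1\le h,r\le n}
=T^k\,S_J(\mathbb{A}-\mathbb{B}-T),
\]
which is the claimed identity. I do not expect a genuine obstacle here: the only point that needs checking is that the row operations can be carried out simultaneously (they can, since each operation refers only to the original rows) and that the three-term recursion works at the boundary indices $i\le 0$, which is handled by the zero conventions on $S^i$ and $\Lambda^i$.
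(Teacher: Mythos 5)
Your argument is correct. Note that the paper does not actually prove this lemma: it is quoted directly from Lascoux's book (equation (1.4.8) of \cite{Lascoux03}), so there is no in-paper proof to compare against. What you supply is a clean, self-contained verification, and it is essentially the classical mechanism behind Lascoux's transformation formulas: the relation $S^i(\mathbb{A}-\mathbb{B}-T)=S^i(\mathbb{A}-\mathbb{B})-T\,S^{i-1}(\mathbb{A}-\mathbb{B})$ encodes multiplication of the generating series by $(1-zT)$, and your row operations realize this as left multiplication of the defining $(n+1)\times(n+1)$ matrix by a unitriangular (hence determinant-preserving) matrix, after which the last column has the single entry $T^k$ in the corner and the cofactor expansion gives $T^k S_J(\mathbb{A}-\mathbb{B}-T)$. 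The details check out: the entries of the last column are $T^{k+n+1-h}$ with $k\ge 0$, so no negative powers of $T$ arise; the boundary cases $j_r+r-h\le 0$ in the first $n$ columns are covered by the convention $S^i=0$ for $i<0$ together with $S^0=1$; and the simultaneous row operations are legitimate since each new row refers only to original rows. So your proof is a valid replacement for the citation, and arguably a useful addition, since it makes the paper's use of \cite[equation (1.4.8)]{Lascoux03} self-contained.
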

%
%------------------------------------------------------------------
%------------------------------------------------------------------
%------------------------------------------------------------------
%------------------------------------------------------------------
%
\subsection{Remainders as Schur functions}
\label{subsec: remainder as Schur functions}
Let $\mathbb{A}$ and $\mathbb{B}$ be two alphabets of cardinalities
$e$ and $d$ respectively, with $e> d$. When the Euclidean algorithm
is applied to two generic polynomials $S^{e}(T-\mathbb{A})$ and
$S^{d}(T-\mathbb{B})$ we obtain $d$ remainders $\mathcal{R}_1,
\dots, \mathcal{R}_{d}$ and quotients $q_1, \dots, q_{d}$ satisfying
the following identities:
\begin{align}\label{eq: euclidean_alg}
S^{e}(T-\mathbb{A}) & = q_1S^{d}(T-\mathbb{B}) + \mathcal{R}_1, \\
         S^{d}(T-\mathbb{B}) & =  q_2 \mathcal{R}_1 + \mathcal{R}_2, \nonumber \\
         \mathcal{R}_1 & = q_3\mathcal{R}_2+ \mathcal{R}_3, \nonumber  \\
         & \ \ \vdots \nonumber   \\
%         \mathcal{R}_{k-2}& = q_k\mathcal{R}_{k-1}+\mathcal{R}_k. \nonumber \\
%         &\ \ \vdots \nonumber \\
         \mathcal{R}_{d-2}& = q_{d}\mathcal{R}_{d-1}+ \mathcal{R}_{d}. \nonumber
 \end{align}
Here $\deg_T q_1=e-d$, $\deg_T q_i=1$ for $2\le i \le d$, and
$\deg_T\mathcal{R}_i=d-i$ for $1\le i \le d$. It turns out that all
the remainders $\mathcal{R}_i$ are elements of the ring
$\fq[\mathbb{A}, \mathbb{B}][T]$. Further, we may express these
remainders in terms of Schur functions \cite[equation
(3.1.5)]{Lascoux03}:
\begin{equation}\label{eq: schur_remainder_formula}
\mathcal{R}_k=(-1)^{d-k+1}S_{(e-d+k)^{k-1}}(\mathbb{B}
-\mathbb{A}-T)S^{e}(T-\mathbb{A}) +
S_{k^{e-d+k-1}}(\mathbb{A}-\mathbb{B}-T)S^{d}(T-\mathbb{B}).
\end{equation}
%
%------------------------------------------------------------------
%------------------------------------------------------------------
%------------------------------------------------------------------
%------------------------------------------------------------------
%------------------------------------------------------------------
%------------------------------------------------------------------
%------------------------------------------------------------------
%------------------------------------------------------------------
%
\section{Degree bounds for the remainders in the generic case}
\label{sec: degree bounds}
In the sequel, for $1\le k \le d$ we denote by $F_k\in
\fq[\mathbb{A}, \mathbb{B}]$ the leading coefficient of
$\mathcal{R}_k$, considered as an element of $\fq[\mathbb{A},
\mathbb{B}][T]$. Let $\mathbf S :=(S^{i}(-\mathbb{B}): \, 1\le i \le
d)$ and $\mathbf T:=(S^i(\mathbb{A}): \, 1\le i \le e)$. Observe
that both $\mathbf S$ and $\mathbf T$ are algebraically independent
sets over $\fq$.
\begin{proposition}\label{prop: main}
$F_k$ is a nonzero element of $\fq[\mathbf T][\mathbf S]$ of degree
$\deg_{\mathbf S}F_k=e-d+k$. Further, it is monic  of degree $e-d+k$
in $S^{k}(-\mathbb{B})$.
\end{proposition}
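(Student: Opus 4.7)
The plan is to extract $F_k$ directly from \eqref{eq: schur_remainder_formula} by splitting $\mathcal{R}_k=\alpha_k+\beta_k$ with
$\alpha_k:=(-1)^{d-k+1}S_{(e-d+k)^{k-1}}(\mathbb{B}-\mathbb{A}-T)\,S^e(T-\mathbb{A})$
and
$\beta_k:=S_{k^{e-d+k-1}}(\mathbb{A}-\mathbb{B}-T)\,S^d(T-\mathbb{B})$,
and reading off the coefficient of $T^{d-k}$ in each summand separately. The crucial tool throughout is the pair of identities $S^i(\mathbb{A}-\mathbb{B}-T)=S^i(\mathbb{A}-\mathbb{B})-T\,S^{i-1}(\mathbb{A}-\mathbb{B})$ and $S^i(\mathbb{A}-\mathbb{B})=\sum_{\ell}S^{i-\ell}(\mathbb{A})\,S^\ell(-\mathbb{B})$, coming from \eqref{eq: cauchy_formula}--\eqref{eq: cauchy_formula_difference}; they make every entry of the Jacobi--Trudi matrix of $S_{k^{e-d+k-1}}(\mathbb{A}-\mathbb{B})$ linear in $\mathbf{S}$ with coefficients in $\fq[\mathbf{T}]$.

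For $\beta_k$, the coefficient of $T^{d-k}$ equals $\sum_{r\ge 0}\bigl([T^r]\,S_{k^{e-d+k-1}}(\mathbb{A}-\mathbb{B}-T)\bigr)\,S^{k+r}(-\mathbb{B})$. In the summand $r=0$, I would isolate the top $S^k(-\mathbb{B})$-contribution to $S_{k^{e-d+k-1}}(\mathbb{A}-\mathbb{B})$ by picking the Cauchy term $\ell=k$ in each of the $N:=e-d+k-1$ matrix entries; this reduces to the $N\times N$ determinant $\det\bigl(S^{j-i}(\mathbb{A})\bigr)$, which is upper unitriangular because $S^{<0}(\mathbb{A})=0$ and $S^0(\mathbb{A})=1$, hence equal to $1$. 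Multiplying by the outer $S^k(-\mathbb{B})$ yields $(S^k(-\mathbb{B}))^{e-d+k}$ with coefficient $1$. For $r\ge 1$ the outer factor $S^{k+r}(-\mathbb{B})$ is a distinct variable of $\mathbf{S}$, so these summands satisfy $\deg_{S^k(-\mathbb{B})}\le N$; since every matrix entry is $\mathbf{S}$-linear, the full contribution of $\beta_k$ to $F_k$ has $\deg_\mathbf{S}\le N+1=e-d+k$.

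For $\alpha_k$, I would apply the classical identity $S_\lambda(-X)=(-1)^{|\lambda|}S_{\lambda'}(X)$ with $X=\mathbb{A}-\mathbb{B}+T$ to rewrite
$S_{(e-d+k)^{k-1}}(\mathbb{B}-\mathbb{A}-T)=(-1)^{(e-d+k)(k-1)}\,S_{(k-1)^{e-d+k}}(\mathbb{A}-\mathbb{B}+T)$,
and then expand the right-hand side by the horizontal-strip Pieri rule as a sum $\sum_\nu T^{|\lambda'|-|\nu|}\,S_\nu(\mathbb{A}-\mathbb{B})$ over $\nu\subseteq(k-1)^{e-d+k}$. Each $S_\nu(\mathbb{A}-\mathbb{B})$ is a Jacobi--Trudi determinant of size $\ell(\nu)\le e-d+k$ with $\mathbf{S}$-linear entries, so $\deg_\mathbf{S} S_\nu(\mathbb{A}-\mathbb{B})\le e-d+k$. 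Moreover, since $\nu_i\le k-1$, selecting $\ell=k$ in the Cauchy expansion of the $(i,j)$-entry $S^{\nu_i-i+j}(\mathbb{A}-\mathbb{B})$ requires $j\ge i+1$; as no permutation $\sigma$ of $\{1,\dots,\ell(\nu)\}$ can satisfy $\sigma(i)>i$ for every $i$, the monomial $(S^k(-\mathbb{B}))^{\ell(\nu)}$ cannot appear in the determinantal expansion, so $\deg_{S^k(-\mathbb{B})} S_\nu(\mathbb{A}-\mathbb{B})\le\ell(\nu)-1\le e-d+k-1$.

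Assembling the estimates, the monomial $(S^k(-\mathbb{B}))^{e-d+k}$ appears in $F_k$ only through the $r=0$ summand of $\beta_k$, with coefficient $1$, while every other contribution satisfies $\deg_{S^k(-\mathbb{B})}\le e-d+k-1$ and $\deg_\mathbf{S}\le e-d+k$; this simultaneously yields $F_k\ne 0$, $\deg_\mathbf{S} F_k=e-d+k$, and the monicity of $F_k$ in $S^k(-\mathbb{B})$. I expect the $\alpha_k$-side to be the main obstacle: carrying out the $\lambda$-ring rewriting, tracking the partitions $\nu$ that survive the Pieri expansion, and running the pigeonhole on permutations needed to rule out the maximal power $(S^k(-\mathbb{B}))^{e-d+k}$ in the first summand.
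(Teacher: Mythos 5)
Your proposal is correct, and its core coincides with the paper's proof: both extract the coefficient of $T^{d-k}$ from the two summands of \eqref{eq: schur_remainder_formula}, and your treatment of $\beta_k$ is essentially the paper's key Claim in disguise — your $r=0$ factor $S_{k^{e-d+k-1}}(\mathbb{A}-\mathbb{B})$ is exactly the paper's minor $A_{d-k,\,d-k}$, and your unitriangular coefficient determinant $\det\bigl(S^{j-i}(\mathbb{A})\bigr)=1$ is the same diagonal-versus-off-diagonal permutation argument the paper uses to get monicity of degree $e-d+k-1$ in $S^k(-\mathbb{B})$, after which multiplying by the outer $S^k(-\mathbb{B})$ gives the monic top term of degree $e-d+k$. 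Where you genuinely diverge is the first summand $\alpha_k$: the paper stays with the shape $(e-d+k)^{k-1}$, applies Lemma \ref{lemma: Lascoux 148}, and simply observes that the relevant minors $B_{h,\,d-k}$ are $(k-1)\times(k-1)$ determinants with $\mathbf S$-linear entries, hence of total $\mathbf S$-degree at most $k-1\le e-d+k-1$ (using $e>d$), which already kills any competition with the top power of $S^k(-\mathbb{B})$; you instead conjugate the shape via $S_\lambda(-X)=(-1)^{|\lambda|}S_{\lambda'}(X)$, expand in $T$ by the single-variable branching (Pieri) rule, and run a pigeonhole argument on permutations to bound $\deg_{S^k(-\mathbb{B})}$ of each $S_\nu(\mathbb{A}-\mathbb{B})$ by $\ell(\nu)-1\le e-d+k-1$. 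This is valid (the duality and branching identities do hold for formal differences of alphabets, and are in Lascoux's book), and it has the mild virtue of bounding the $S^k(-\mathbb{B})$-degree directly rather than through the total $\mathbf S$-degree; but it imports two classical $\lambda$-ring identities the paper never needs, and it yields a weaker total-degree bound ($e-d+k$ instead of $k-1$) for that summand — the untransposed shape gives the needed estimate for free precisely because it has only $k-1$ rows. If you keep your route, state explicitly that the duality and Pieri expansions are being used in the difference-of-alphabets setting, since that is the only point where your argument leans on material outside the paper's toolkit.
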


\begin{proof}
Since $S^{d}(T-\mathbb{B})=\sum_{j=0}^{d}S^{d-j}(-\mathbb{B})T^j$,
we can write
$$
S_{k^{e-d+k-1}}(\mathbb{A}-\mathbb{B}-T)S^{d}(T-\mathbb{B})=
\sum_{j=0}^{d}S^{d-j}(-\mathbb{B})
S_{k^{e-d+k-1}}(\mathbb{A}-\mathbb{B}-T)T^j.
$$
By Lemma \ref{lemma: Lascoux 148} we have
$$
S_{k^{e-d+k-1}}(\mathbb{A}-\mathbb{B}-T)T^j =S_{k^{e-d+k-1};\,
j}(\mathbb{A}-\mathbb{B}; T),
$$
where
\begin{equation}\label{eq: prop: main: determinat_1}
S_{k^{e-d+k-1};\, j}(\mathbb{A}-\mathbb{B}; T)= \det \left(
 \begin{array}{cccc}
 S^k(\mathbb{A-\mathbb{B}}) & \cdots &\!\!\! S^{e-d+2k-2}(\mathbb{A-\mathbb{B}})
 &\!\!\! T^{j+e-d+k-1} \\
 S^{k-1}(\mathbb{A-\mathbb{B}}) & \ddots & \vdots & \vdots \\
 \vdots & \ddots & S^k(\mathbb{A-\mathbb{B}}) & T^{j+1} \\
\!\!\! S^{-(e-d-1)}(\mathbb{A-\mathbb{B}})\!\!\! & \cdots & S^{k-1}(\mathbb{A-\mathbb{B}}) & T^j \\
 \end{array}
 \right).
\end{equation}
Similarly, taking into account that
$S^{e}(T-\mathbb{A})=\sum_{h=0}^{e}S^{e-h}(-\mathbb{A})T^h$, we see
that
$$
S_{(e-d+k)^{k-1}}(\mathbb{B}-\mathbb{A}-T)S^{e}(T-\mathbb{A})=
\sum_{h=0}^{e}S^{e-h}(-\mathbb{A})S_{(e-d+k)^{k-1}}(\mathbb{B}-\mathbb{A}-T)T^h.
$$
Again by Lemma \ref{lemma: Lascoux 148}, we have
$$
S_{(e-d+k)^{k-1}}(\mathbb{B}-\mathbb{A}-T)T^h =S_{(e-d+k)^{k-1};\,
h}(\mathbb{B}-\mathbb{A}; T),
$$
where
\begin{equation}\label{eq: prop: main: determinat_2}
S_{(e-d+k)^{k-1};\, h}(\mathbb{B}-\mathbb{A}; T)=\det \left(
 \begin{array}{cccc}
 S^{e-d+k}(\mathbb{A-\mathbb{B}})  & \cdots & S^{e-d+2k-2}(\mathbb{A-\mathbb{B}}) & T^{h+k-1} \\
 S^{e-d+k-1}(\mathbb{A-\mathbb{B}})  & \ddots & \vdots & \vdots \\
 \vdots & \ddots & S^{e-d+k}(\mathbb{A-\mathbb{B}})  & T^{h+1} \\
 S^{e-d+1}(\mathbb{A-\mathbb{B}}) & \cdots & S^{e-d+k-1}(\mathbb{A-\mathbb{B}})  & T^h \\
 \end{array}
 \right).
\end{equation}

Denote by $A_{j, \, d-k}$ the coefficient of the monomial $T^{d-k}$
in $S_{k^{e-d+k-1};\, j}(\mathbb{A}-\mathbb{B}; T)$ for $0\le j \le
d$, considering $S_{k^{e-d+k-1};\, j}(\mathbb{A}-\mathbb{B}; T)$ as
an element of $\fq[\mathbb{A},\mathbb{B}][T]$. Further, denote by
$B_{h, \, d-k}$ the coefficient of the monomial $T^{d-k}$ in
$S_{(e-d+k)^{k-1};\, h}(\mathbb{B}-\mathbb{A}; T)$ for $0\le h \le
e$.  By \eqref{eq: schur_remainder_formula} we have
$$
F_k=(-1)^{d-k+1}\sum_{h=0}^{e}S^{e-h}(-\mathbb{A})B_{h, \,d-k} +
\sum_{j=0}^{d}S^{d-j}(-\mathbb{B})A_{j,\, d-k}.
$$
By \eqref{eq: prop: main: determinat_1} and \eqref{eq: prop: main:
determinat_2} it is clear that $A_{j,\, d-k}=0$ for $j> d-k$ and $j+e-d+k-1<d-k$ and
$B_{h, \,d-k}=0$ for $h> d-k$ and $h+k-1<d-k$. Thus,
\begin{equation}\label{eq: F_k_formula}
F_k=(-1)^{d-k+1}\sum_{h=\max\{d-2k+1,
\,0\}}^{d-k}S^{e-h}(-\mathbb{A})B_{h, \,d-k} + \sum_{j=\max
\{2d-e-2k+1,\,0\}}^{d-k}S^{d-j}(-\mathbb{B})A_{j,\, d-k}.
\end{equation}
Now, $A_{j,\, d-k}$ and $B_{h, \,d-k}$ are the determinants of the
submatrices obtained by removing the last column and the row
corresponding to $T^{d-k}$ in the matrices of \eqref{eq: prop: main:
determinat_1} and \eqref{eq: prop: main: determinat_2} respectively.
More precisely,
%$$
%A_{j,\, \beta-k}=\det \begin{tikzpicture}[decoration={brace,amplitude=5pt},baseline=(current bounding box.west)]
%     \matrix (magic) [matrix of math nodes,left delimiter=(,right delimiter=)] {
%      S_{k}(\mathbb{A}-\mathbb{B}) \\
%      &  \ddots \\
%      & & S_{k}(\mathbb{A}-\mathbb{B})\\
%      & & & S_{k-1}(\mathbb{A-\mathbb{B}}) \\
%      & & & & \ddots \\
%      & & & & &  S_{k-1}(\mathbb{A-\mathbb{B}})\\
%     };
%     \draw[decorate] (magic-1-1.north east) -- (magic-3-3.north east) node[above=5pt,midway,sloped] {$j+e-2d+2k-1$ copies};
%     \draw[decorate] (magic-4-4.north east) -- (magic-6-6.north east) node[above=5pt,midway,sloped] {$d-k-j$ copies};
%   \end{tikzpicture},
%$$
%
$$
A_{j,\, d-k}=\det \left(
 \begin{array}{cccccc}  S^{k}(\mathbb{A}-\mathbb{B}) & \cdots \\
  \vdots    &  \ddots \\
      & & S^{k}(\mathbb{A}-\mathbb{B})& \cdots\\
      & & \vdots& S^{k-1}(\mathbb{A-\mathbb{B}})& \cdots \\
      & &  &\vdots & \ddots \\
      & & & & &  S^{k-1}(\mathbb{A-\mathbb{B}})
 \end{array}\right),
$$
with the first $j+e-2d+2k-1$ columns having
$S^{k}(\mathbb{A}-\mathbb{B})$ in the diagonal and the last $d-k-j$
columns having $S^{k-1}(\mathbb{A-\mathbb{B}})$ in the diagonal, and
%$$
%B_{h, \,d-k}=\det \begin{tikzpicture}[decoration={brace,amplitude=5pt},baseline=(current bounding box.west)]
%     \matrix (magic) [matrix of math nodes,left delimiter=(,right delimiter=)] {
%      S_{\alpha-d+k}(\mathbb{A}-\mathbb{B}) \\
%      &  \ddots \\
%      & & S_{\alpha-d+k}(\mathbb{A}-\mathbb{B})\\
%      & & & S_{\alpha-d+k-1}(\mathbb{A-\mathbb{B}}) \\
%      & & & & \ddots \\
%      & & & & &  S_{\alpha-d+k-1}(\mathbb{A-\mathbb{B}})\\
%     };
%     \draw[decorate] (magic-1-1.north east) -- (magic-3-3.north east) node[above=5pt,midway,sloped] {$h-d+2k-1$ copies};
%     \draw[decorate] (magic-4-4.north east) -- (magic-6-6.north east) node[above=5pt,midway,sloped] {$d-k-h$ copies};
%   \end{tikzpicture}
%$$
$$
B_{h, \,d-k}=\det \left(\begin{array}{cccccc}
    S^{e-d+k}(\mathbb{A}-\mathbb{B})\!\!\!\!&\cdots \\
\vdots    & \!\!\!\!\!\!\!\!\ddots \\
    & &\!\!\!\!\!\!\!\!S^{e-d+k}(\mathbb{A}-\mathbb{B})&\cdots\\
    & &\vdots &\!\!\!\!S^{e-d+k-1}(\mathbb{A-\mathbb{B}})&\cdots \\
    & & & \vdots& \!\!\!\!\!\!\!\!\ddots \\
    & & & & &  \!\!\!\!\!\!\!\!S^{e-d+k-1}(\mathbb{A-\mathbb{B}})
   \end{array}\right),
$$
with the first $h-d+2k-1$ columns having
$S^{e-d+k}(\mathbb{A}-\mathbb{B})$ in the diagonal and the last
$d-k-h$ columns having $S^{e-d+k-1}(\mathbb{A-\mathbb{B}})$ in the
diagonal.

Considering $A_{j,\, d-k}$ and $B_{h, \,d-k}$ as polynomials in the
variables $\mathbf S :=(S^{i}(-\mathbb{B}): \, 1\le i \le d)$ and
coefficients in $\fq[\mathbf T]$, by their determinantal expressions
we conclude that
\begin{equation}\label{eq: S_degrees}
\deg_{\mathbf S}A_{j,\, d-k}\le e-d+k-1, \quad \deg_{\mathbf S}B_{h,
\,d-k}\le k-1.
\end{equation}
Combining these upper bounds and \eqref{eq: F_k_formula} we readily
see that
\begin{equation}\label{eq: total deg_F_k_inequality}
\deg_{\mathbf S}F_k\le e-d+k.
\end{equation}
\begin{claim} $A_{d-k, \, d-k}$ is a nonzero element of
$\fq[\mathbf T][\mathbf S]$ with $\deg_{\mathbf S}A_{d-k, \,
d-k}=e-d+k-1$. Further, $A_{d-k, \, d-k}$ is monic of degree
$e-d+k-1$ in $S^{k}(-\mathbb{B})$.\end{claim}
\begin{proof}[Proof of Claim.] Let $N:=e-d+k-1$. Observe
that the determinantal expression of $A_{d-k, \, d-k}$ consists of
$e-d+k-1$ columns having $S^{k}(\mathbb{A}-\mathbb{B})$ in the
diagonal. More precisely, $A_{d-k, \, d-k}=\det (a_{ij})_{1 \le i,j
\le N}$, where $ a_{ij}:=S^{k+j-i}(\mathbb{A}-\mathbb{B}) $. We
remark that
$$
S^{h}(\mathbb{A}-\mathbb{B}) = S^{h}(-\mathbb{B})+
S^{1}(\mathbb{A})S^{h-1}(-\mathbb{B})+ \cdots + S^{h}(\mathbb{A})
$$
is a polynomial of degree one in the variables $\mathbf S$ for $1\le
h \le N$. Further,  $S^{k}(\mathbb{A}-\mathbb{B})$ is monic of
degree one in $S^{k}(-\mathbb{B})$ and
$S^{h}(\mathbb{A}-\mathbb{B})$ is of degree zero in
$S^{k}(-\mathbb{B})$ for $h< k$. Write
$$
A_{d-k, \, d-k}=\sum_{\sigma}\pm a_{1\sigma_1}a_{2\sigma_2}\cdots
a_{N\sigma_N},
$$
where $\sigma$ runs over all permutations of $(1,2,\dots, N)$. By
the previous remarks we see that
$$\deg_{\mathbf S}A_{d-k, \, d-k}\le N.$$
To prove the equality, consider a permutation $(\sigma_1, \dots,
\sigma_N) \neq (1,2,\dots, N)$. Then there exists an index $i$ with
$\sigma_i < i$. For such an index, since $k+\sigma_i-i < k$, the
entry $a_{i\sigma_i}=S^{k+\sigma_i-i}(\mathbb{A}-\mathbb{B})$ has
degree zero in $S^{k}(-\mathbb{B})$. Thus
$a_{1\sigma_1}a_{2\sigma_2}\cdots a_{N\sigma_N}$ has degree at most
$N-1$ in $S^{k}(-\mathbb{B})$. On the other hand, the term $
a_{11}\cdots a_{NN}=S^{k}(\mathbb{A}-\mathbb{B})^{N} $ is monic of
degree $N$ in $S^{k}(-\mathbb{B})$. This implies the claim.
\end{proof}

Write the second sum in \eqref{eq: F_k_formula} as
\begin{align*}
\sum_{j=\max\{2d-e-2k+1,\,0\}}^{d-k}\!\!\!\!\!\!S^{d-j}(-\mathbb{B})A_{j,\,
d-k}=S^{k}(-\mathbb{B})A_{d-k, \, d-k} +\!\!\!\!
\sum_{j=\max\{2d-e-2k+1,
\,0\}}^{d-k-1}\!\!\!\!\!\!S^{d-j}(-\mathbb{B})A_{j,\,d-k}.
\end{align*}
According to the claim, the polynomial  $S^{k}(-\mathbb{B})A_{d-k,
\, d-k}$ is monic of degree $e-d+k$ in $S^{k}(-\mathbb{B})$.
Further, by \eqref{eq: S_degrees}, taking into account that
$S^{d-j}(-\mathbb{B})\neq S^k(-\mathbb{B})$ for $0\le j \le d-k-1$
it follows that
$$
\deg_{S^k(-\mathbb{B})}\Bigg( \sum_{j=\max\{2d-e-2k+1,
\,0\}}^{d-k-1}S^{d-j}(-\mathbb{B})A_{j,\,d-k}\Bigg)\le e-d+k-1.
$$
Therefore, $\sum_{j=\max\{2d-e-2k+1,
\,0\}}^{d-k}S^{d-j}(-\mathbb{B})A_{j,\, d-k}$ is monic of degree
$e-d+k$ in $S^{k}(-\mathbb{B})$. On the other hand, according to
\eqref{eq: S_degrees}, the first sum in the right-hand side of
\eqref{eq: F_k_formula} has degree at most $k-1$, and then less than
$e-d+k-1$, in $S^k(-\mathbb{B})$. We conclude that $F_k$ is monic of
degree $e-d+k$ in $S^{k}(-\mathbb{B})$. This together with
\eqref{eq: total deg_F_k_inequality} implies that $\deg_{\mathbf S}
F_k=e-d+k$, which finishes the proof of the proposition.
\end{proof}

%\begin{lemma}\label{lemma: degree_det} Let $A=(a_{ij})_{1\le i,j \le n}$ be a matrix with entries in a polynomial ring $k[\mathbb{B}]$, with $\mathbb{B}$ a %finite set of indeterminates over a field $k$.
%Assume that the entries in the diagonal ar all nonzero and that $\deg(a_{ij})=\deg(a_{ii})+ j-i$ whenever $a_{ii}a_{ij}\neq 0$. Then, either $\det A = 0$ or  %$\det A$ is a homogeneous polynomial of degree equal to the degree of the product $a_{11}a_{22} \cdots a_{nn}$ of the entries in the diagonal.
%\end{lemma}
%
%\begin{proof}
%For every permutation $(\sigma_1, \dots, \sigma_n)$ of $(1, \dots, n)$, either $a_{1\sigma_1} a_{2\sigma_2} \cdots a_{n\sigma_n}=0$ or we have
%%
%\begin{align*}
%\deg(a_{1\sigma_1} a_{2\sigma_2} \cdots a_{n\sigma_n})& =\sum_{i=1}^{n}\deg(a_{i\sigma_i})
          %                                       = \sum_{i=1}^{n} \deg(a_{ii}) + \sigma_{i}-i\\
           %                            &=\deg (a_{11}a_{22} \cdots a_{nn})+    \sum_{i=1}^{n} \sigma_{i}-i\\
            %                           &=  \deg (a_{11}a_{22} \cdots a_{nn}).
%\end{align*}
 %
% Since $\det A=\sum_{\sigma} \pm \, a_{1\sigma_1} a_{2\sigma_2} \cdots a_{n\sigma_n}$ the lemma follows.
%\end{proof}

\subsection{Specialization of the generic case}
\label{sec: specialization}
As expressed in Section \ref{subsec: remainder as Schur functions},
for {\em generic} input elements
$S^{e}(T-\mathbb{A})\in\fq[\mathbb{A}][T]$ and
$S^{d}(T-\mathbb{B})\in\fq[\mathbb{B}][T]$, the Euclidean algorithm
performs $d$ steps. Further, if the Euclidean algorithm is applied
to polynomials $g,f\in\fq[T]$ with $\deg g=e$ and $\deg f=d$, and
performs $d$ steps, then the degrees of the successive remainders
decrease by 1 each step, and the sequences of quotients and
remainders associated to $g$ and $f$ coincide with the
specialization of the sequences associated to $S^{e}(T-\mathbb{A})$
and $S^{d}(T-\mathbb{B})$. The next result shows that, if the
sequence of remainders associated to polynomials $g,f\in\fq[T]$ with
$\deg g=e$ and $\deg f=d$ fails to have the degree pattern of the
generic case, the first remainder where such a failure occurs is
still a specialization of the corresponding one of the generic case.
\begin{lemma} \label{lemma: specialization}
For a specialization $\mathbb{A}\mapsto \overline{a}$ and
$\mathbb{B}\mapsto \overline{b}$ in $\fq$, denote by $r_1, \dots,
r_{k}$ the first $k$ remainders of the application of the Euclidean
algorithm to $S^{e}(T-\overline{a})$ and $S^{d}(T-\overline{b})$. If
$\deg r_i=d-i$ for $1\le i \le k-1$, then
$r_i=\mathcal{R}_i(\overline{a}, \overline{b})$ for $1\le i \le k$.
Further, if $\deg \mathcal{R}_i(\overline{a}, \overline{b})=d-i$ for
$1\le i \le k-1$, then $r_i=\mathcal{R}_i(\overline{a},
\overline{b})$ for $1\le i \le k$.
\end{lemma}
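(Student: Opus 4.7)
The plan is to prove both statements by a simultaneous induction on $i$, $1\le i\le k$, exploiting the uniqueness of Euclidean division in $\fq[T]$ together with the generic identities \eqref{eq: euclidean_alg}. The central observation is that a generic step $\mathcal{R}_{i-2}=q_i\mathcal{R}_{i-1}+\mathcal{R}_i$ lives in $\fq(\mathbb{A},\mathbb{B})[T]$ with denominators that are powers of $F_{i-1}$ (one obtains $q_i$ by two elementary division steps, each dividing by the leading coefficient of $\mathcal{R}_{i-1}$), so it can be specialized at $(\overline{a},\overline{b})$ provided $F_{i-1}(\overline{a},\overline{b})\neq 0$; by Proposition \ref{prop: main} this is equivalent to $\deg\mathcal{R}_{i-1}(\overline{a},\overline{b})=d-i+1$.

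First I would handle the base case $i=1$. Since $S^e(T-\overline{a})$ and $S^d(T-\overline{b})$ are both monic in $T$, the generic identity $S^e(T-\mathbb{A})=q_1 S^d(T-\mathbb{B})+\mathcal{R}_1$ already holds in $\fq[\mathbb{A},\mathbb{B}][T]$ with no denominators, and its specialization at $(\overline{a},\overline{b})$ is the first Euclidean division of $S^e(T-\overline{a})$ by $S^d(T-\overline{b})$ in $\fq[T]$. Uniqueness of the Euclidean division immediately forces $r_1=\mathcal{R}_1(\overline{a},\overline{b})$, with no hypothesis required.

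For the inductive step I would assume $r_i=\mathcal{R}_i(\overline{a},\overline{b})$ for all $1\le i\le j$, with $j\le k-1$, and argue that under either hypothesis $\deg\mathcal{R}_j(\overline{a},\overline{b})=d-j$: this holds directly under the second hypothesis, while under the first hypothesis it follows from $\mathcal{R}_j(\overline{a},\overline{b})=r_j$ (inductive hypothesis) together with $\deg r_j=d-j$. Consequently $F_j(\overline{a},\overline{b})\neq 0$, so specializing the identity $\mathcal{R}_{j-1}=q_{j+1}\mathcal{R}_j+\mathcal{R}_{j+1}$ at $(\overline{a},\overline{b})$ yields a valid identity in $\fq[T]$ in which $\deg\mathcal{R}_{j+1}(\overline{a},\overline{b})\le d-j-1<\deg\mathcal{R}_j(\overline{a},\overline{b})$. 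Comparing this with the actual Euclidean division $r_{j-1}=Q_{j+1}r_j+r_{j+1}$ in $\fq[T]$ — whose dividend and divisor match the specialized identity by the inductive hypothesis — uniqueness of Euclidean division forces $r_{j+1}=\mathcal{R}_{j+1}(\overline{a},\overline{b})$, closing the induction.

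The main obstacle is the careful bookkeeping of denominators in the generic quotients $q_i$: one needs to confirm that the only denominators that enter when performing the $i$-th generic Euclidean step are powers of $F_{i-1}$, so that the nonvanishing of $F_{i-1}$ at $(\overline{a},\overline{b})$ suffices to justify the specialization. Once this point is settled, the rest of the argument is a transparent induction driven by the uniqueness of polynomial division in $\fq[T]$, and both hypotheses feed into the same inductive engine through the single condition $F_j(\overline{a},\overline{b})\neq 0$.
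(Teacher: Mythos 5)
Your proposal is correct and takes essentially the same route as the paper's proof: induction in which the degree hypothesis gives $F_j(\overline{a},\overline{b})\neq 0$, the fact that $q_{j+1}\in\fq[\mathbb{A},\mathbb{B}]_{F_j}[T]$ allows specialization of the $(j+1)$th identity of \eqref{eq: euclidean_alg}, and the degree bound $\deg_T\mathcal{R}_{j+1}(\overline{a},\overline{b})\le d-j-1<\deg_T\mathcal{R}_j(\overline{a},\overline{b})$ together with uniqueness of Euclidean division closes the step. The only cosmetic differences are that you run both assertions through a single induction (the paper proves the first and notes the second is analogous), and that the equivalence of $F_j(\overline{a},\overline{b})\neq 0$ with $\deg\mathcal{R}_j(\overline{a},\overline{b})=d-j$ needs only that $F_j$ is the coefficient of $T^{d-j}$ in $\mathcal{R}_j$, not Proposition \ref{prop: main}.
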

\begin{proof}
Substituting $\overline{a}$ for $\mathbb{A}$ and $\overline{b}$ for
$\mathbb{B}$ in the first identity of \eqref{eq: euclidean_alg} we
easily see that $\mathcal{R}_1(\overline{a}, \overline{b})$ is the
remainder in the division of $S^{e}(T-\overline{a})$ by
$S^{d}(T-\overline{b})$, which proves that
$r_1=\mathcal{R}_1(\overline{a}, \overline{b})$. Let $j>1$ and
assume inductively that $r_i=\mathcal{R}_i(\overline{a},
\overline{b})$ for $1\le i \le j <k$. Thus $\deg
\mathcal{R}_i(\overline{a}, \overline{b})=d-i$ for $1\le i \le j$.
Taking into account that $F_i$ is the leading coefficient of
$\mathcal{R}_i$ for $1 \le i \le k$, we deduce that
$F_i(\overline{a}, \overline{b})\neq 0$ for $1\le i \le j$. Since
$q_{j+1}\in \fq[\mathbb{A},\mathbb{B}]_{F_j}[T]$, where
$\fq[\mathbb{A},\mathbb{B}]_{F_j}$ is the localization of
$\fq[\mathbb{A},\mathbb{B}]$ at $F_j$, we can substitute
$\overline{a}$ for $\mathbb{A}$ and $\overline{b}$ for $\mathbb{B}$
in the $(j+1)$th equation of \eqref{eq: euclidean_alg} to obtain
$$
r_{j-1}=q_{j+1}(\overline{a},
\overline{b})r_j+\mathcal{R}_{j+1}(\overline{a}, \overline{b}).
$$
Since
$$
\deg_T\mathcal{R}_{j+1}(\overline{a}, \overline{b})\le
\deg_T\mathcal{R}_{j+1}=d-j-1< \deg_T\mathcal{R}_j(\overline{a},
\overline{b}),
$$
we conclude that $\mathcal{R}_{j+1}(\overline{a}, \overline{b})$ is
the remainder in the division of $r_{j-1}$ by $r_j$. In other words,
$r_{j+1}=\mathcal{R}_{j+1}(\overline{a}, \overline{b})$, which
completes the proof of the first assertion of the lemma. The second
assertion is proved with a similar argument.
\end{proof}

%Let $g\in \fq[T]$ be a fixed polynomial of degree $e$. For
%$1\le k \le d-1$, denote
%%
%$$
%B_k:=\{(s_1, \dots, s_{d})\in \fq^{d}: \deg \big(\gcd (g,
%f)\big)=k ,\, \textit{where} \, f:=T^{d}+s_1T^{d-1}+\cdots +
%s_{d} \}.
%$$
%%
%The sets $B_k$ play a crucial role in our analysis of the behavior
%of the Euclidean algorithm applied to pairs $(g,f)$, with $g$ fixed.
%The next results will allow us to estimate the cardinalities of the
%$B_k$.

Let $\overline{a}\in \cfq{\!}^{e}$ be the tuple of roots of $g$ in
any order, so that $g=S^{e}(T-\overline{a})$. Let
$G_k:=F_k(\mathbf{S}(\overline{a}),\mathbf{S}(-\mathbb{B}))$ denote
the polynomial obtained by substituting $\overline{a}$ for
$\mathbb{A}$ in $F_k$. Since the set $\mathbf T:=(S^i(\mathbb{A}):
1\le i \le e)$ consists of the first $e$ complete symmetric
functions in $\mathbb{A}$, it follows that $\mathbf T(\overline{a})$
belongs to $\fq^{e}$, and thus $G_k$ belongs to $\fq[\mathbf S]$.
Further, Proposition \ref{prop: main} shows that $G_k$ is a nonzero
polynomial with $\deg_{\mathbf S}G_k=e-d+k$, which is monic in
$S^k(-\mathbb{B})$ with $\deg_{S^k(-\mathbb{B})}G_k= e-d+k$.

We end this section with a result which will be crucial to establish
lower bounds for the average-case complexity of the Euclidean
algorithm. %Theorem \ref{teo: prob coprime} shows that,
As we shall see in the next section, for a fixed $g\in\fq[T]$ with
$\deg g=e$, a random element $f\in \fq[T]$ with $\deg f=d$ and $g$
are relatively prime with high probability. In this sense, we call a
polynomial $f\in\fq[T]$ with $\deg f=d$ {\em generic} (with respect
to $g$) if the remainder sequence in the Euclidean algorithm applied
to the pair $(g,f)$ has length $d$. In particular, in such a
remainder sequence $(r_1,\ldots,r_d)$ we have $\deg(r_k)=d-k$ for
$1\le k\le d$. The next result establishes a lower bound on the
number generic monic elements in $\fq[T]$ of degree $d$.
\begin{proposition}\label{prop: lower bound generic pols}
Let $\mathcal{G}\subset\fq[T]$ be the set of monic elements of
degree $d$ which are generic in the sense above. Then
$$|\mathcal{G}|\ge q^d\Bigg(1-\frac{d(2e-d+1)}{2q}\Bigg).$$
In particular, for $q>d(2e-d+1)/2$ the set $\mathcal{G}$ is
nonempty.
\end{proposition}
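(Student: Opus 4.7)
The plan is to bound the complement $\fq^d\setminus\mathcal{G}$ of the non--generic monic polynomials of degree $d$ by the union of the $\fq$--rational points of the hypersurfaces cut out by the leading coefficients $G_k$ of Section \ref{sec: specialization}, for $1\le k\le d$, and then to apply the bound \eqref{eq: upper bound -- affine gral} to each of these hypersurfaces.

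First, identify the set of monic polynomials $f\in\fq[T]$ of degree $d$ with $\fq^d$. Writing $f=S^d(T-\mathbb{B})$ for the alphabet $\mathbb{B}$ of roots of $f$, the coefficients of $f$ are exactly the values $(S^1(-\mathbb{B}),\ldots,S^d(-\mathbb{B}))=\mathbf{S}(-\overline{b})$, where $\overline{b}$ is any ordering of the roots of $f$ in $\cfq$. The assignment $f\mapsto \mathbf{S}(-\overline{b})\in\fq^d$ is a bijection. Fix also $\overline{a}\in\cfq{\!}^e$ with $g=S^e(T-\overline{a})$.

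Next, use Lemma \ref{lemma: specialization} to detect non-genericity. Suppose $f$ is non-generic, so that its remainder sequence $(r_1,\ldots,r_\ell)$ either satisfies $\ell<d$ or has some $\deg r_i<d-i$; in both cases there is a smallest index $k\in\{1,\ldots,d\}$ with $\deg r_k\ne d-k$. Since $\deg r_i=d-i$ for $i<k$, the first assertion of the lemma yields $r_i=\mathcal{R}_i(\overline{a},\overline{b})$ for $1\le i\le k$, so $\deg\mathcal{R}_k(\overline{a},\overline{b})<d-k$. This exactly means that the leading coefficient $F_k(\overline{a},\overline{b})=G_k(\mathbf{S}(-\overline{b}))$ vanishes. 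Hence the image in $\fq^d$ of $\fq^d\setminus\mathcal{G}$ is contained in $\bigcup_{k=1}^d V(G_k)(\fq)$.

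Finally, by Proposition \ref{prop: main} and the remark following it, each $G_k\in\fq[\mathbf{S}]$ is a nonzero polynomial of total degree $e-d+k$, so $V(G_k)\subset\A^d$ is a hypersurface of degree $e-d+k$. Applying \eqref{eq: upper bound -- affine gral} with $r=d-1$ gives $|V(G_k)(\fq)|\le(e-d+k)\,q^{d-1}$. A union bound together with the arithmetic identity
$$\sum_{k=1}^{d}(e-d+k)=d(e-d)+\frac{d(d+1)}{2}=\frac{d(2e-d+1)}{2}$$
yields $|\fq^d\setminus\mathcal{G}|\le q^{d-1}\,d(2e-d+1)/2$, and therefore
$$|\mathcal{G}|\ge q^d-\frac{d(2e-d+1)}{2}\,q^{d-1}=q^d\Bigl(1-\frac{d(2e-d+1)}{2q}\Bigr),$$
as required; the non-emptiness statement follows immediately. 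The conceptual heart of the argument is the "first-failure index" reduction in the middle paragraph, which converts a step-wise combinatorial condition on the Euclidean algorithm into the single algebraic condition of lying on one of a few hypersurfaces of controlled degree; everything else is a packaging of results already proved.
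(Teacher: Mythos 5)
Your proposal is correct and follows essentially the same route as the paper: non-genericity is converted, via Lemma \ref{lemma: specialization}, into the vanishing of some leading coefficient $G_k$, and then the bound \eqref{eq: upper bound -- affine gral} is applied to the hypersurfaces $V(G_k)$ of degrees $e-d+k$ (Proposition \ref{prop: main}), summing to $d(2e-d+1)/2$. Your ``first-failure index'' paragraph merely spells out more explicitly the equivalence the paper invokes in one line, so the two arguments coincide in substance.
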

\begin{proof}
Let $f:=T^d+s_1T^{d-1}+\cdots+s_d\in \mathcal{G}$ and let
$(r_1,\ldots,r_d)$ be the sequence of remainders in the Euclidean
algorithm applied to the pair $(g,f)$. By hypothesis $\deg(r_j)=d-j$
for $1\le j\le d$, which by Lemma \ref{lemma: specialization} is
equivalent to the condition $G_j(s_1, \dots, s_d)\not=0$ for $1\le
j\le d$. It follows that
$$\mathcal{G}=\bigcap_{j=1}^d\big(\fq^d\setminus\mathcal{V}(G_j)(\fq)\big)
=\fq^d\setminus\bigcup_{j=1}^d\mathcal{V}(G_j)(\fq).$$
As a consequence,
$$|\mathcal{G}|=q^d-\Bigg|\bigcup_{j=1}^d\mathcal{V}(G_j)(\fq)\Bigg|
\ge q^d-\sum_{j=1}^d|\mathcal{V}(G_j)(\fq)|.$$
According to \eqref{eq: upper bound -- affine gral}, we have
$$
\sum_{k=1}^d|\mathcal{V}(G_k)(\fq)|\leq {q^{d-1}}\sum_{k=1}^d
(e-d+k)= q^{d-1}\frac{d(2e-d+1)}{2},$$
which readily implies the proposition.
\end{proof}
%
%------------------------------------------------------------------------------
%------------------------------------------------------------------------------
%------------------------------------------------------------------------------
%------------------------------------------------------------------------------
%------------------------------------------------------------------------------
%------------------------------------------------------------------------------
%------------------------------------------------------------------------------
%------------------------------------------------------------------------------
%
\section{Analysis of the average degree in the Euclidean algorithm}
\label{sec: average degree}
Let $e, d$ be positive integers with $e>d$. For any $m\ge 0$, we
denote by $\fq[T]_m$ the set of monic polynomials of degree $m$ with
coefficients in $\fq$. For a fixed $g\in \fq[T]_e$, let
$\mathcal{X}_g: \fq[T]_d \rightarrow \{0,\ldots,d\}$,
$\mathcal{X}_g(f)=\deg (\gcd(g,f))$ be the random variable defined
by the degree of the greatest common divisor $\gcd(g,f)$, where
$\fq[T]_d$ is endowed with the uniform probability. Applying the
Euclidean algorithm to a pair $(g,f)$ with $f\in\fq[T]_d$ we obtain
a positive integer $k$ with $1\leq k\leq d$, a unique polynomial
quotient sequence $(q_1,\ldots,q_{k+1})$ and a unique polynomial
remainder sequence $(r_1,\ldots,r_k)$, satisfying the following
conditions:
\begin{align*}
g&=  f\cdot q_1+r_1, & \deg(r_1)<&\deg(f),\\
f&=r_1\cdot q_2+r_2, &
\deg(r_2)<&\deg(r_1),\\
&\ \ \vdots&\quad \vdots \\%\quad\quad\quad \quad\quad\quad \vdots\\
r_{k-2}&=r_{k-1}\cdot q_k+r_k,
&\deg(r_k)<&\deg(r_{k-1}),\\
r_{k-1}&=r_k\cdot q_{k+1}.
\end{align*}
First we study the average degree of the gcd, namely the expected
value of $\mathcal{X}_g$:
\begin{equation}\label{eq: def expected value degree}
E[\mathcal{X}_g]=\sum_{i=0}^di\,\frac{|B_i|}{q^d}=\sum_{i=1}^di\,\frac{|B_i|}{q^d}
=\sum_{i=1}^d\sum_{j=i}^d\frac{|B_j|}{q^d},
\end{equation}
where $B_i:=\{f\in \fq[T]_d:\,\mathcal{X}_g(f)=i\}$ for $0\le i\le
d$.

%Denote by $\mathcal{V}(G_j)(\fq)$ the set of $\fq$--rational points
%of $\mathcal{V}(G_j)$. We may express Corollary
%\ref{coro:caracterization union Bd-k} in the following way:
%\begin{align}
%B_d &\subset \mathcal{V}(G_1)(\fq)\nonumber\\
%B_{d-1} \cup B_d &\subset \mathcal{V}(G_1)(\fq) \cup \mathcal{V}(G_2)(\fq)\nonumber\\
%&\ \ \vdots   \nonumber\\
%B_1 \cup \dots \cup B_{d-1}\cup B_d& \subset \mathcal{V}(G_1)(\fq)
%\cup \dots \cup \mathcal{V}(G_d)(\fq).\label{eq: union B_k}
%\end{align}
%%
%Taking into account that all the unions in the left--hand side of
%these expressions involve disjoint sets, adding all the occurring
%cardinalities we conclude that
%%
%$$
%\sum_{k=1}^d k|B_k| \leq \sum_{j=1}^d (d+1-j)|\mathcal{V}(G_j)(\fq)|.
%$$
%
%To estimate $|B_k|$ we estimate the cardinality of
%$\mathcal{V}(G_j)(\fq)$. By \eqref{eq: upper bound -- affine gral}
%and Proposition \ref{prop: main},
%%
%$$
%|\mathcal{V}(G_j)(\fq)|  \leq \deg(G_j)\, q^{\dim \mathcal{V}(G_j)}
%\leq  (e-d+j)\, q^{d-1}.
%$$
%%
%It follows that
%%
%$$
%E[\mathcal{X}_g]=\sum_{k=1}^d k \frac{|B_{k}|}{q^d} \le \sum_{j=1}^d
%\frac{(d+1-j)(e-d+j)}{q} =\frac{d(d+1)(3e-2d+2)}{6q}.
%$$
%%
%We summarize the previous considerations in the following result.
%

We start with an estimate on
$\sum_{j=1}^d{|B_j|}=\big|\bigcup_{j=1}^dB_j\big|$. For this
purpose, observe that
$$%\begin{equation}\label{carac set B_0}
\bigcup_{j=1}^dB_j=\{f \in \fq[T]_d:\mathrm{res}(g,f)=0\},
$$%\end{equation}
where $\mathrm{res}(\cdot,\cdot)$ denotes resultant.
%$\mathrm{res}(g,T^d+S_{d-1}T_{d-1}+\cdots+T_0)$ is a nonzero
%polynomial in $\fq[S_{d-1},\ldots,S_0]$ of degree $e$. Therefore, by
%\eqref{eq: upper bound -- affine gral} we deduce that
%%
%$$|\{f \in
%\fq[T]_d:\mathrm{res}(g,f)=0\}|\le e\,q^{d-1}.$$
%%
%As a consequence,
We recall that $g$ has factorization pattern
$(\lambda_1,\ldots,\lambda_e)\in\Z_{\ge 0}^e$, with
$\lambda_1+2\,\lambda_2+\cdots+e\,\lambda_e=e$, if $g$ has
$\lambda_i$ irreducible factors in $\fq[T]$ of degree $i$ (counting
multiplicities) for $1\le i\le e$. We shall also consider the {\em
reduced} factorization pattern
$(\lambda_1^*,\ldots,\lambda_e^*)\in\Z_{\ge 0}^e$ of $g$, where
$\lambda_i^*$ is the number of distinct irreducible factors of $g$
in $\fq[T]_i$ for $1\le i\le e$, and denote by $g^*$ the square-free
part of $g$, namely the product of all distinct irreducible factors
of $g$ (without multiplicities). In particular, we have that
$(\lambda_1^*,\ldots,\lambda_e^*)$ is the factorization pattern of
$g^*$. We have the following result.
\begin{proposition}\label{prop: estimate B_0}
Let $e,d$ be integers with $e>d>0$. Let $g$ be an element of
$\fq[T]_e$, $g^*$ its square-free part and
$(\lambda_1^*,\ldots,\lambda_e^*)$ the factorization pattern of
$g^*$. Let $k$ be the least integer with $\lambda_k^*>0$. If $k\le
d$, then
$$
\lambda_k^*\,q^{d-k}-\binom{\lambda_k^*}{2}q^{\max\{d-2k,0\}} \le
\Bigg|\bigcup_{j=1}^dB_j\Bigg|\le
\lambda_k^*\,q^{d-k}+\sum_{i=k+1}^d\lambda_i^*\,q^{d-i}.
$$
\end{proposition}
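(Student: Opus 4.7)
The plan is to translate $|\bigcup_{j=1}^d B_j|$ into a counting problem about divisibility by the distinct irreducible factors of $g$, and then obtain the two bounds by different applications of inclusion--exclusion. As the excerpt already notes, $\bigcup_{j=1}^d B_j=\{f\in\fq[T]_d:\mathrm{res}(g,f)=0\}$, which equals $\{f\in\fq[T]_d:\gcd(g,f)\neq 1\}$, and this in turn is the condition that some monic irreducible factor of $g$ divide $f$. Writing $P_1,\dots,P_m$ for the distinct monic irreducible factors of $g$ (equivalently, of $g^*$) and setting $A_i:=\{f\in\fq[T]_d:P_i\mid f\}$, I therefore have $\bigcup_{j=1}^d B_j=\bigcup_{i=1}^m A_i$. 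Since an element of $A_i$ is uniquely of the form $P_i\cdot h$ with $h\in\fq[T]_{d-\deg P_i}$, one gets $|A_i|=q^{d-\deg P_i}$ if $\deg P_i\le d$ and $|A_i|=0$ otherwise.

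For the upper bound I would simply apply the union bound $|\bigcup_i A_i|\le\sum_i|A_i|$ and group the factors by degree: since $\lambda_i^*$ is the number of irreducible factors of $g$ of degree $i$, and since factors of degree $>d$ contribute nothing, this yields $\sum_i|A_i|=\sum_{i=k}^d\lambda_i^*\,q^{d-i}=\lambda_k^*\,q^{d-k}+\sum_{i=k+1}^d\lambda_i^*\,q^{d-i}$, matching exactly the upper bound in the statement.

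For the lower bound I would restrict attention to the $\lambda_k^*$ irreducible factors $Q_1,\dots,Q_{\lambda_k^*}$ of $g$ of minimal degree $k$ and the corresponding divisibility sets $C_i:=\{f\in\fq[T]_d:Q_i\mid f\}$, for which clearly $|\bigcup_i A_i|\ge|\bigcup_i C_i|$. Applying the Bonferroni inequality of order two then gives
$$\Bigg|\bigcup_{i=1}^{\lambda_k^*}C_i\Bigg|\ge\sum_{i=1}^{\lambda_k^*}|C_i|-\sum_{1\le i<j\le\lambda_k^*}|C_i\cap C_j|.$$
Because $Q_i$ and $Q_j$ are coprime for $i\neq j$, the intersection $C_i\cap C_j$ consists precisely of the multiples of $Q_iQ_j$ in $\fq[T]_d$, so $|C_i\cap C_j|=q^{d-2k}$ when $2k\le d$ and $|C_i\cap C_j|=0$ otherwise; in both situations it is bounded above by $q^{\max\{d-2k,0\}}$. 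Combined with $|C_i|=q^{d-k}$ this yields the stated lower bound.

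There is no deep obstacle here: the argument is a direct application of the union bound and of second-order Bonferroni once the problem has been recast in terms of the distinct monic irreducible factors of $g$. The only point requiring a bit of attention is the degenerate case $2k>d$, where pairwise intersections are empty; this is absorbed cleanly by the $\max$ appearing in the exponent of the lower bound.
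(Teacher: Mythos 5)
Your proof is correct and follows essentially the same route as the paper: the paper merely phrases the condition $\gcd(g,f)\neq 1$ through the vanishing of the resultants $\mathrm{res}(g_{i,j},F(\bfs s,T))$, but it immediately translates each such vanishing back into ``$g_{i,j}$ divides $f$'' and then counts the same sets of multiples, using the union bound for the upper estimate and the second-order Bonferroni inequality (with the same $q^{d-2k}$ versus empty-intersection dichotomy) for the lower one. Your direct divisibility formulation simply omits this resultant bookkeeping, so no substantive difference remains.
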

\begin{proof}
For $f\in\fq[T]_d$ we have $\mathrm{res}(g,f)=0$ if and only if
$\mathrm{res}(g^*,f)=0$. As a consequence, we shall consider the
resultant $\mathrm{res}(g^*,f)$. Denote by
$g_i:=\prod_{j=1}^{\lambda_i^*}g_{i,j}$ the product of all
irreducible factors of $g^*$ of degree $i$ for $1\le i\le e$. Let
$\bfs S:=(S_{d-1},\ldots,S_0)$ be a vector of indeterminates and
$$F(\bfs S,T):=T^d+S_{d-1}T^{d-1}+\cdots+S_0.$$
The product formula for the resultant (see, e.g., \cite[Theorem
4.16]{BaPoRo06}) implies
$$\mathrm{res}(g^*,F(\bfs S,T))=\prod_{i=k}^e{\sf R}_i:=
\prod_{i=k}^e\mathrm{res}(g_i,F(\bfs S,T))
=\prod_{i=k}^e\prod_{j=1}^{\lambda_i^*}\mathrm{res}(g_{i,j},F(\bfs
S,T)).$$

Now, for any $i$ with $k\le i\le d$ and $\lambda_i^*>0$, we have
$${\sf R}_i:=\prod_{j=1}^{\lambda_i^*}{\sf R}_{i,j},\quad
{\sf R}_{i,j}:=\mathrm{res}(g_{i,j},F(\bfs
S,T)).$$%=\prod_{\{\alpha:\,g_{i,j}(\alpha)=0\}} F(\bfs S,\alpha).$$
Since $g_{i,j}$ is an irreducible element of $\fq[T]$, for $\bfs
s\in\fq^d$ we have ${\sf R}_{i,j}(\bfs s)=0$ if and only if
$g_{i,j}$ divides $F(\bfs s,T)$. Further, as $\{F(\bfs s,T):\bfs
s\in\fq^d\}\subset\fq[T]_d$, we conclude that there is a bijection
between the set of $\fq$-rational zeros of ${\sf R}_{i,j}$ and the
set of multiples in $\fq[T]_d$ of $g_{i,j}$. As the latter has
cardinality $q^{d-i}$, we conclude that
$|\mathcal{V}({\sf R}_{i,j})(\fq)|= q^{d-i}$.
Therefore,
$$
|\mathcal{V}({\sf
R}_i)(\fq)|=\Bigg|\bigcup_{j=1}^{\lambda_i^*}\mathcal{V}({\sf
R}_{i,j})(\fq)\Bigg|\le \lambda_i^*\,q^{d-i}.
$$

On the other hand, for $i>d$ with $\lambda_i^*>0$, there is no
element of $\fq[T]_d$ having a nontrivial common factor with $g_i$
defined over $\fq$. This implies that the set $\mathcal{V}({\sf
R}_i)(\fq)$ is empty, namely
$$
|\mathcal{V}({\sf R}_i)(\fq)|=0.
$$

Now we focuss on the case $i=k$. If ${\sf
R}_k:=\prod_{j=1}^{\lambda_k^*}{\sf R}_{k,j}$, we have
$$|\mathcal{V}({\sf R}_k)(\fq)|\le |\mathcal{V}(\mathrm{res}(g,F(\bfs S,T)))(\fq)|\le
|\mathcal{V}({\sf R}_k)(\fq)|+\sum_{i=k+1}^d\lambda_i^*\,q^{d-i}.$$
Our previous argument shows that $\mathcal{V}({\sf R}_k)(\fq)$ is a
union of $\lambda_k^*$ sets $\mathcal{V}({\sf R}_{k,j})(\fq)$ of
cardinality $q^{d-k}$, which are pairwise distinct.
%if $\lambda_1>0$, then the inclusion-exclusion formula shows that
%%
%\begin{align*}
%|\mathcal{V}({\sf
%R}_1)(\fq)|&=\Bigg|\bigcup_{\{\alpha\in\fq:g_1(\alpha)=0\}}\mathcal{V}(F(\bfs
%S,\alpha))(\fq)\Bigg|\\&=\left\{\begin{array}{lc}
%\lambda_1q^{d-1}-\binom{\lambda_1}{2}q^{d-2}+
%\binom{\lambda_1}{3}q^{d-3}+\cdots+
%(-1)^{\lambda_1-1}q^{d-\lambda_1}\quad\textrm{for }d\ge\lambda_1,\\[1ex]
%\lambda_1q^{d-1}-\binom{\lambda_1}{2}q^{d-2}+
%\binom{\lambda_1}{3}q^{d-3}+\cdots+
%(-1)^{\lambda_1-d}\binom{\lambda_1}{d}\quad\textrm{for }d<\lambda_1.
%\end{array}\right.
%\end{align*}
%%
Further, $\bfs s\in \mathcal{V}({\sf R}_{k,j_1})(\fq)\cap
\mathcal{V}({\sf R}_{k,j_2})(\fq)$ for $j_1\not= j_2$ if and only if
both $g_{k,j_1}$ and $g_{k,j_2}$ divide  $F(\bfs s,T)$. As
$g_{k,j_1}$ and $g_{k,j_2}$ are two distinct irreducible elements of
$\fq[T]$, this holds if and only if $g_{k,j_1}\cdot g_{k,j_2}$
divides $F(\bfs s,T)$. It follows that
$$|\mathcal{V}({\sf R}_{k,j_1})(\fq)\cap \mathcal{V}({\sf
R}_{k,j_2})(\fq)|=\left\{\begin{array}{cc} q^{d-2k}&\text{for }d\ge
2k,\\0&\text{for }d<2k.\end{array}\right.$$
In particular, the Bonferroni inequalities imply
$$\lambda_k^*\,q^{d-k}-\binom{\lambda_k^*}{2}q^{\max\{d-2k,0\}}\le |\mathcal{V}({\sf
R}_k)(\fq)|=\Bigg|\bigcup_{j=1}^{\lambda_k^*}\mathcal{V}({\sf
R}_{k,j})(\fq)\Bigg|\le \lambda_k^*\,q^{d-k}.$$
From this the statement of the proposition readily follows.
\end{proof}

%We remark that, for $\lambda_1=0$, the argument of the proof allows
%us to conclude that
%$\big|\bigcup_{i=1}^dB_i\big|=\mathcal{O}(q^{d-i})$, where $i$ is
%the least positive integer with $\lambda_i>0$.
%
As an immediate consequence of Proposition \ref{prop: estimate B_0}
we obtain an estimate on the probability that a random element of
$\fq[T]_d$ is relatively prime with $g$.
\begin{theorem}\label{teo: prob coprime} Let $e,d$ be integers with $e>d>0$.
Let $g$ be an element of $\fq[T]_e$, $g^*$ its square-free part and
$(\lambda_1^*,\ldots,\lambda_e^*)$ the factorization pattern of
$g^*$. Let $k$ be the least integer with $\lambda_k^*>0$. If $k\le
d$, then the probability $\mathcal{P}_0:={|B_0|}/{q^{d}}$ that a
random element $f\in \fq[T]_d$ and $g$ are relatively prime is
bounded in the following way:
$$
1 - \frac{\lambda_k^*}{q^k} -\sum_{i=k+1}^d\frac{\lambda_i^*}{q^i}
\le \mathcal{P}_0\le 1 -
\frac{\lambda_k^*}{q^k}+\binom{\lambda_k^*}{2}\frac{1}{q^{\min\{2k,d\}}}.
$$
In particular, for $q>2e$ we have $\mathcal{P}_0
>\frac{1}{2}$.
\end{theorem}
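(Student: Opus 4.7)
The proof should fall out as a direct corollary of Proposition~\ref{prop: estimate B_0}. The first step is to note that, since $\gcd(g,f)$ divides $f\in\fq[T]_d$, its degree lies in $\{0,1,\dots,d\}$, so the sets $B_0,B_1,\dots,B_d$ form a disjoint partition of $\fq[T]_d$. Consequently $|B_0|=q^d-\big|\bigcup_{j=1}^d B_j\big|$, whence
$$\mathcal{P}_0=1-\frac{1}{q^d}\Bigg|\bigcup_{j=1}^d B_j\Bigg|,$$
and the problem reduces to translating the two-sided estimate of Proposition~\ref{prop: estimate B_0} through this identity.

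The second step is a direct substitution. Dividing the upper bound of Proposition~\ref{prop: estimate B_0} by $q^d$ and subtracting from $1$ yields the claimed lower bound on $\mathcal{P}_0$, while dividing the lower bound by $q^d$ and using the identity $q^{\max\{d-2k,0\}-d}=q^{-\min\{d,2k\}}$ yields the upper bound on $\mathcal{P}_0$. Both manipulations are purely formal.

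For the ``in particular'' clause, I would argue as follows. Since $k\ge 1$ (the exponents $i$ range over positive integers) and $\deg g^*=\sum_{i=1}^e i\,\lambda_i^*\le e$, we have $\sum_{i=k}^d\lambda_i^*\le\sum_{i=1}^e\lambda_i^*\le e$. Substituting this crude estimate into the lower bound gives
$$\mathcal{P}_0\ge 1-\sum_{i=k}^d\frac{\lambda_i^*}{q^i}\ge 1-\frac{1}{q^k}\sum_{i=k}^d\lambda_i^*\ge 1-\frac{e}{q^k}\ge 1-\frac{e}{q}>\frac{1}{2}$$
whenever $q>2e$. No genuine obstacle arises here: all the substantive work---namely the computation of $|\mathcal{V}({\sf R}_{k,j})(\fq)|$ via the product formula for the resultant, and the control of the pairwise intersections via the Bonferroni inequalities---has already been carried out in Proposition~\ref{prop: estimate B_0}. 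The present theorem amounts to rephrasing those counting estimates as a statement about probabilities.
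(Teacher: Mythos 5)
Your proposal is correct and follows the same route as the paper: the identity $|B_0|=q^d-\big|\bigcup_{j=1}^d B_j\big|$ followed by direct substitution of the two-sided bound of Proposition~\ref{prop: estimate B_0}, with the exponent identity $\max\{d-2k,0\}-d=-\min\{2k,d\}$ handled exactly as the paper intends. Your derivation of the ``in particular'' clause via $\sum_{i=k}^d\lambda_i^*\le\deg g^*\le e$ and $q^i\ge q^k\ge q$ correctly fills in the step the paper leaves implicit.
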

\begin{proof}
Observe that
$$%\begin{equation}\label{carac set B_0}
|B_0|=\big|\{f \in \fq[T]_d:
\gcd(g,f)=1\}\big|=\Bigg|\fq^d\setminus\bigcup_{j=1}^dB_j\Bigg|=q^d
-\Bigg|\bigcup_{j=1}^dB_j\Bigg|.
$$%\end{equation}
Then the statement readily follows from Proposition \ref{prop:
estimate B_0}.
\end{proof}

If the square-free part $g^*$ of $g\in\fq[T]_e$ has a factorization
pattern $(\lambda_1^*,\ldots,\lambda_e^*)$ as in Theorem \ref{teo:
prob coprime}, then all its irreducible factors have degree at least
$k$. It follows that $B_1\cup\cdots\cup B_{k-1}$ is the empty set,
which implies the following corollary.
\begin{corollary}
\label{coro: estimate cardinal B_j cup B_d for j le k}
With hypotheses as in Theorem \ref{teo: prob coprime}, for $1\le
i\le k$, we have
$$
\lambda_k^*\,q^{d-k}-\binom{\lambda_k^*}{2}q^{\max\{d-2k,0\}} \le
\Bigg|\bigcup_{j=i}^dB_j\Bigg|\le
\lambda_k^*\,q^{d-k}+\sum_{j=k+1}^d\lambda_j^*\,q^{d-j}.
$$
\end{corollary}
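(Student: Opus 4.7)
The plan is to reduce the estimate for $\bigl|\bigcup_{j=i}^d B_j\bigr|$ (for $1 \le i \le k$) to the bound for $\bigl|\bigcup_{j=1}^d B_j\bigr|$ already proved in Proposition \ref{prop: estimate B_0}. The key observation, recorded in the paragraph preceding the statement, is that since $k$ is the least index with $\lambda_k^* > 0$, every irreducible factor of $g$ (equivalently, of $g^*$) has degree at least $k$; consequently every positive-degree common divisor of $g$ and a given $f \in \fq[T]_d$ has degree at least $k$, so that $B_j = \emptyset$ for every $j$ with $1 \le j \le k-1$.

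Granted this, for any $i$ with $1 \le i \le k$ I would simply write
$$\bigcup_{j=i}^d B_j \,=\, \bigcup_{j=k}^d B_j \,=\, \bigcup_{j=1}^d B_j,$$
the first equality absorbing the empty sets $B_i,\dots,B_{k-1}$ on the left and the second prepending the empty sets $B_1,\dots,B_{k-1}$ on the right. Hence the cardinality in question is independent of $i$ in the admissible range and coincides with $\bigl|\bigcup_{j=1}^d B_j\bigr|$, to which Proposition \ref{prop: estimate B_0} applies verbatim, yielding exactly the asserted two-sided bound. There is no real obstacle here: all the analytic content (counting $\fq$-rational zeros of the resultants ${\sf R}_{i,j}$ and applying Bonferroni) is already contained in Proposition \ref{prop: estimate B_0}, and what remains is the bookkeeping fact that the sets $B_j$ with $1 \le j < k$ are empty under the hypothesis on the factorization pattern of $g^*$.
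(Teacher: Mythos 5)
Your proposal is correct and is exactly the paper's argument: the paragraph preceding the corollary notes that all irreducible factors of $g^*$ have degree at least $k$, so $B_1,\dots,B_{k-1}$ are empty, and hence $\bigcup_{j=i}^d B_j=\bigcup_{j=1}^d B_j$ for $1\le i\le k$, to which Proposition \ref{prop: estimate B_0} applies verbatim. Nothing further is needed.
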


Next we bound the sum of the cardinalities of $\bigcup_{j=i}^d B_j$
for $i\ge k+1$.
\begin{proposition}\label{prop: upper bound deg gcd ge j}
Let $g\in\fq[T]_e$ have a factorization pattern
$(\lambda_1,\ldots,\lambda_e)$ and let $k$ be the least index with
$\lambda_k>0$. We have
$$
\sum_{i=k+1}^d\Bigg|\bigcup_{j=i}^dB_j\Bigg|\le
\sum_{i=k+1}^d(i-k)\,q^{d-i}\sum_{\stackrel{\scriptstyle
h_k\le\lambda_k,\ldots,\,h_i\le\lambda_i}{k\,h_k+\cdots+i\,h_i=i}}
\binom{\lambda_k}{h_k}\cdots \binom{\lambda_i}{h_i}.
$$
%
%where $\eta_i$ is the number of distinct factors of $g$ in
%$\fq[T]_i$ for $k+1\le i\le d$.
%
\end{proposition}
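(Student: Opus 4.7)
The plan is to swap the order of summation on the left and then bound each $|B_j|$ separately by a union bound over monic divisors of $g$ of degree $j$.

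First, I would observe that the $B_j$ are pairwise disjoint, so $|A_i| = \sum_{j=i}^d |B_j|$ where $A_i := \bigcup_{j=i}^d B_j$. A routine swap of summations gives
\begin{equation*}
\sum_{i=k+1}^d |A_i| = \sum_{i=k+1}^d \sum_{j=i}^d |B_j| = \sum_{j=k+1}^d (j-k)\,|B_j|.
\end{equation*}
Thus the proposition reduces to the pointwise estimate $|B_j| \le q^{d-j}\,D_j(g)$ for each $j$ with $k+1 \le j \le d$, where $D_j(g)$ denotes the inner sum $\sum \binom{\lambda_k}{h_k}\cdots\binom{\lambda_j}{h_j}$ in the statement.

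For that pointwise bound I would use the trivial observation that for every $f \in B_j$ the polynomial $h := \gcd(g,f)$ is a monic divisor of $g$ of degree exactly $j$ which divides $f$. Hence
\begin{equation*}
B_j \subset \bigcup_{\substack{h \mid g,\ h \text{ monic} \\ \deg h = j}} \{f \in \fq[T]_d : h \mid f\}.
\end{equation*}
Since the multiples of a fixed monic $h$ of degree $j$ in $\fq[T]_d$ are parametrized by the quotient $f/h \in \fq[T]_{d-j}$, each set on the right has exactly $q^{d-j}$ elements, and a union bound yields $|B_j| \le N_j(g)\,q^{d-j}$, where $N_j(g)$ is the number of monic divisors of $g$ of degree $j$.

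The remaining step is the combinatorial inequality $N_j(g) \le D_j(g)$. Because every irreducible factor of $g$ has degree at least $k$, a monic divisor of $g$ of degree $j$ is obtained by choosing, for each $i$ with $k \le i \le j$, a sub-multiset of size $h_i$ of the irreducible factors of $g$ of degree $i$ (counted with multiplicity), subject to $h_i \le \lambda_i$ and $\sum_{i=k}^j i\,h_i = j$. Treating the $\lambda_i$ factors of degree $i$ as distinguishable overcounts slightly (exactly when $g$ has repeated irreducible factors) but gives the clean bound $\prod_{i=k}^j \binom{\lambda_i}{h_i}$ for each admissible tuple; summing over such tuples yields $N_j(g) \le D_j(g)$. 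Plugging this into the Fubini reformulation gives the proposition. I do not expect a serious obstacle: the only subtlety is confirming that the admitted overcount in the divisor count is harmless because only an upper bound is sought.
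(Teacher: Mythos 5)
Your proof is correct and follows essentially the same route as the paper: a union bound over monic divisors of $g$ of degree $j$ (each having exactly $q^{d-j}$ multiples in $\fq[T]_d$), a summation swap producing the factor $(i-k)$, and the same bound on the number of such divisors by $\sum\binom{\lambda_k}{h_k}\cdots\binom{\lambda_i}{h_i}$ (which the paper phrases via the coefficient $[X^i]\prod_{j=k}^i(1+X^j)^{\lambda_j}$). The only cosmetic difference is that you bound each $|B_j|$ separately after swapping, whereas the paper bounds each union $\bigcup_{j\ge i}B_j$ first and then swaps.
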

\begin{proof}
Observe that
$$B_i\cup\cdots\cup B_d=\{f\in\fq[T]_d:\deg\gcd(g,f)\ge i\}.$$
%
%By the standard theory of subresultants, we have (see, e.g.,
%\cite[Proposition 4.24]{BaPoRo06}):
%%
%\begin{align*}
%\{f\in\cfq[T]_d:\deg\gcd(g,f)\ge j\}=\{\bfs s\in\A^d:F_0(\bfs
%s)=\cdots=F_{j-1}(\bfs s)=0\},
%\end{align*}
%%
%where $F_k(S_{d-1},\ldots,S_0):=\mathrm{subres}_k(g,
%T^d+S_{d-1}T^{d-1}+\cdots+S_0)$ is the $k$th subresultant of $g$ and
%$F(\bfs S, T):=T^d+S_{d-1}T^{d-1}+\cdots+S_0$ with respect to $T$
%for $0\le k\le j-1$. Observe that $\mathrm{subres}_0(\cdot,\cdot)=
%\mathrm{res}(\cdot,\cdot)$.
%
%We claim that $F_0,\ldots,F_{j-1}$ form a regular sequence of
%$\fq[\bfs S]$. It suffices to prove that the affine variety
%$\mathcal{V}(F_0,\ldots,F_{k-1})\subset\A^d$ has dimension $d-k$ for
%$1\le k\le j$.First
%
Fix a factor $m\in\fq[T]_j$
of degree $j\ge i$ of $g$. %Expressing polynomial multiplication in
%terms of Toeplitz matrices (see, e.g., \cite[(2.4.3)]{Pan01}), it is
%easy to see that
Then the set $L_m$ of multiples $f\in\fq[T]_d$ of $m$ %is a linear
%variety of $\fq^d$ of dimension $d-j$, which implies
has cardinality $|L_m|=q^{d-j}$. As a consequence, letting $m$ vary
over the set of factors in $\fq[T]_j$ of $g$ we conclude that
$$\Bigg|\bigcup_{j=i}^dB_j\Bigg|\le \sum_{j=i}^d\eta_j\,q^{d-j},$$
where $\eta_j$ is the number of distinct factors of $g$ in
$\fq[T]_j$ for $i\le j\le d$.
%
%By \eqref{eq: Bezout}, we have
%%
%$$\deg \mathcal{V}(F_0,\ldots,F_{j-1})\le \prod_{k=0}^{j-1}\deg
%\mathcal{V}(F_j)\le \prod_{k=0}^{j-1}\deg F_j\le
%\prod_{k=0}^{j-1}(e-k)=\frac{e\,!}{(e-j)!}.$$
%
%Since $\{f\in\fq[T]_d:\deg\gcd(g,f)\ge j\}$ is the set of
%$\fq$-rational points of $\mathcal{V}j$, by \eqref{eq: upper bound
%-- affine gral} it follows that
%%
%\begin{align*}
%|\{f\in\fq[T]_d:\deg\gcd(g,f)\ge j\}|&=|\mathcal{V}_j(\fq)|\le \deg
%\mathcal{V}_j q^{d-j}\le\binom{e}{j}q^{d-j}.
%\end{align*}
%
%
It follows that
$$\sum_{i=k+1}^d\Bigg|\bigcup_{j=i}^dB_j\Bigg|\le \sum_{i=k+1}^d(i-k)
\,\eta_i\,q^{d-i}.$$
It remains to express the $\eta_i$ in terms of
$\lambda_1,\ldots,\lambda_d$. For this purpose, we observe that
$$\eta_i\le[X^i]\Bigg(\prod_{j=k}^i(1+X^j)^{\lambda_j}\Bigg)
=\sum_{\stackrel{\scriptstyle
h_k\le\lambda_k,\ldots,\,h_i\le\lambda_i}{k\,h_k+\cdots+i\,h_i=i}}
\binom{\lambda_k}{h_k}\cdots \binom{\lambda_i}{h_i},$$
where $[X^i]f$ denotes the coefficient of $X^i$ in the monomial
expansion of $f\in\K[X]$. This proves the proposition.
\end{proof}

%As a consequence, we obtain the following result.
%%
%\begin{corollary}\label{coro: bound tail cup B_j}
%With hypotheses as in Theorem \ref{teo: prob
%coprime}, %for $i\ge k+1$
%we have
%%
%\begin{align*}
%\sum_{i=k+1}^d\Bigg|\bigcup_{j=i}^dB_j\Bigg|&\le
%\sum_{i=k+1}^d(i-k)\sum_{\stackrel{\scriptstyle
%h_k\le\lambda_k,\ldots,\,h_i\le\lambda_i}{k\,h_k+\cdots+i\,h_i=i}}
%\binom{\lambda_k}{h_k}\cdots \binom{\lambda_i}{h_i}\,q^{d-i}\\ &\le
%\sum_{i=k+1}^d(i-k)
%\binom{k\,\lambda_k+\cdots+i\,\lambda_i}{i}\,q^{d-i}.
%\end{align*}
%\end{corollary}

Now we obtain an estimate for the average degree of $\gcd(g,f)$ for
random $f\in\fq[T]_d$.
\begin{theorem}\label{teo: average degree mcd}
Let $e, d$ be integers with $e>d>0$, $g$ an element of $\fq[T]_e$
with factorization pattern $(\lambda_1,\ldots,\lambda_e)$ and $k$
the least index with $\lambda_k>0$. Denote by $\lambda_k^*$ the
number of distinct irreducible factors of $g$ in $\fq[T]_k$. If
$k\le d$, then the average degree $E[\mathcal{X}_g]$ of the greatest
common divisor of $g$ and a random element $f$ of $\fq[T]_d$ is
bounded in the following way:
$$\frac{k\,\lambda_k^*}{q^k}-\binom{\lambda_k^*}{2}\frac{k}{q^{\min\{2k,d\}}}\le
E[\mathcal{X}_g]\le
\frac{k\,\lambda_k^*}{q^k}+\sum_{i=k+1}^d\frac{i}{q^i}
\sum_{\stackrel{\scriptstyle
h_k\le\lambda_k,\ldots,\,h_i\le\lambda_i}{k\,h_k+\cdots+i\,h_i=i}}
\binom{\lambda_k}{h_k}\cdots \binom{\lambda_i}{h_i}.$$
\end{theorem}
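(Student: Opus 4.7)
The plan is to start from the identity
$$E[\mathcal{X}_g]=\frac{1}{q^d}\sum_{i=1}^d\Bigg|\bigcup_{j=i}^dB_j\Bigg|$$
displayed in \eqref{eq: def expected value degree}, and split the sum on $i$ into the ranges $1\le i\le k$ and $k+1\le i\le d$. Since the smallest degree of an irreducible factor of $g$ is $k$, the sets $B_1,\ldots,B_{k-1}$ are all empty, so for every $i\in\{1,\ldots,k\}$ the union $\bigcup_{j=i}^d B_j$ equals $\bigcup_{j=k}^d B_j$, and the first partial sum is exactly $k\,|\bigcup_{j=k}^d B_j|$. Corollary~\ref{coro: estimate cardinal B_j cup B_d for j le k} then provides two-sided bounds for this quantity, while Proposition~\ref{prop: upper bound deg gcd ge j} controls the second partial sum.

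For the lower bound, I would simply drop the nonnegative contribution coming from $i\ge k+1$ and apply the lower estimate of Corollary~\ref{coro: estimate cardinal B_j cup B_d for j le k}:
$$E[\mathcal{X}_g]\ge\frac{k}{q^d}\Bigg|\bigcup_{j=k}^dB_j\Bigg|\ge
\frac{k}{q^d}\Bigl(\lambda_k^*\,q^{d-k}-\tbinom{\lambda_k^*}{2}q^{\max\{d-2k,0\}}\Bigr),$$
which, upon noting that $q^{\max\{d-2k,0\}-d}=q^{-\min\{2k,d\}}$, produces exactly the claimed lower bound. This part is essentially bookkeeping.

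For the upper bound, combining the two halves I would obtain
$$E[\mathcal{X}_g]\le\frac{k\,\lambda_k^*}{q^k}+k\sum_{j=k+1}^d\frac{\lambda_j^*}{q^j}
+\sum_{i=k+1}^d\frac{i-k}{q^i}\sum_{\stackrel{\scriptstyle
h_k\le\lambda_k,\ldots,h_i\le\lambda_i}{k h_k+\cdots+i h_i=i}}
\binom{\lambda_k}{h_k}\cdots\binom{\lambda_i}{h_i}.$$
To collapse this into the form stated in the theorem I would absorb the middle term into the last one. The key observation is that for each $i\in\{k+1,\ldots,d\}$ the multinomial sum contains the single summand corresponding to $h_i=1$ and $h_k=\cdots=h_{i-1}=0$, which equals $\binom{\lambda_i}{1}=\lambda_i\ge\lambda_i^*$. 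Hence the multinomial sum is at least $\lambda_i^*$, and
$$k\,\lambda_i^*+(i-k)\cdot(\text{multinomial sum})\le i\cdot(\text{multinomial sum}),$$
which turns the two terms into the single sum $\sum_{i=k+1}^d\frac{i}{q^i}\sum_{(h_k,\ldots,h_i)}\binom{\lambda_k}{h_k}\cdots\binom{\lambda_i}{h_i}$ as desired.

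The only subtlety I anticipate is precisely this last combinatorial step: one must keep track of the distinction between $\lambda_i$ and $\lambda_i^*$, and check that the contribution of the Corollary's ``tail'' $\sum_{j=k+1}^d\lambda_j^*\,q^{d-j}$ is dominated by the ``single-factor'' term inside the multinomial sum of Proposition~\ref{prop: upper bound deg gcd ge j}. Once this merging is verified, both inequalities follow by direct substitution and no further estimate is needed.
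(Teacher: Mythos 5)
Your argument is correct and follows essentially the same route as the paper: the identity $E[\mathcal{X}_g]=q^{-d}\sum_{i=1}^d\big|\bigcup_{j=i}^dB_j\big|$, Corollary \ref{coro: estimate cardinal B_j cup B_d for j le k} for the range $1\le i\le k$, and Proposition \ref{prop: upper bound deg gcd ge j} for $i\ge k+1$. Your final absorption step (using that the multinomial sum contains the summand $h_i=1$, $h_k=\cdots=h_{i-1}=0$, hence is at least $\lambda_i\ge\lambda_i^*$) is exactly the estimate the paper uses implicitly in passing from its intermediate bound to the stated one, so nothing is missing.
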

\begin{proof}
According to \eqref{eq: def expected value degree},
$$E[\mathcal{X}_g]=\sum_{i=1}^d\sum_{j=i}^d\frac{|B_j|}{q^d}.$$

By Proposition \ref{prop: estimate B_0} and Corollary \ref{coro:
estimate cardinal B_j cup B_d for j le k}, for $1\le i\le k$,
$$
\lambda_k^*\,q^{d-k}-\binom{\lambda_k^*}{2}q^{\max\{d-2k,0\}} \le
\Bigg|\bigcup_{j=i}^dB_j\Bigg|\le
\lambda_k^*\,q^{d-k}+\sum_{i=k+1}^d\lambda_i\,q^{d-i}.$$
By Proposition \ref{prop: upper bound deg gcd ge j}, we have
$$
\sum_{i=k+1}^d\Bigg|\bigcup_{j=i}^dB_j\Bigg|\le
\sum_{i=k+1}^d(i-k)\,q^{d-i} \sum_{\stackrel{\scriptstyle
h_k\le\lambda_k,\ldots,\,h_i\le\lambda_i}{k\,h_k+\cdots+i\,h_i=i}}
\binom{\lambda_k}{h_k}\cdots \binom{\lambda_i}{h_i}.
$$
We conclude that
\begin{align*}
\frac{k\,\lambda_k^*}{q^k}-\binom{\lambda_k^*}{2}\frac{k}{q^{\min\{2k,d\}}}\le
E[\mathcal{X}_g]\le
&\frac{k\,\lambda_k^*}{q^k}+\sum_{i=k+1}^d\frac{k\,\lambda_i}{q^i}
\\&\ + \sum_{i=k+1}^d\frac{i-k}{q^i} \sum_{\stackrel{\scriptstyle
h_k\le\lambda_k,\ldots,\,h_i\le\lambda_i}{k\,h_k+\cdots+i\,h_i=i}}
\binom{\lambda_k}{h_k}\cdots \binom{\lambda_i}{h_i}\\
\le& \frac{k\,\lambda_k^*}{q^k}+ \sum_{i=k+1}^d\frac{i}{q^i}
\sum_{\stackrel{\scriptstyle
h_k\le\lambda_k,\ldots,\,h_i\le\lambda_i}{k\,h_k+\cdots+i\,h_i=i}}
\binom{\lambda_k}{h_k}\cdots \binom{\lambda_i}{h_i},
\end{align*}
which proves the theorem.
\end{proof}

To simplify the upper bound of Theorem \ref{teo: average degree mcd}
we recall that the inner sum in such an upper bound is actually an
upper bound for the number $\eta_i$ of distinct factors of $g$ in
$\fq[T]_i$, namely
$$\eta_i\le[X^i]\Bigg(\prod_{j=k}^i(1+X^j)^{\lambda_j}\Bigg)
=\sum_{\stackrel{\scriptstyle
h_k\le\lambda_k,\ldots,\,h_i\le\lambda_i}{k\,h_k+\cdots+i\,h_i=i}}
\binom{\lambda_k}{h_k}\cdots \binom{\lambda_i}{h_i},$$
with equality when $g$ is square-free.
Using the generalized Vandermonde identity (see, e.g., \cite[page
248]{GrKnPa94}), we have
\begin{equation}\label{eq: eta_i - first bound}
\eta_i\le \sum_{\stackrel{\scriptstyle
h_k\le\lambda_k,\ldots,\,h_i\le\lambda_i}{k\,h_k+\cdots+i\,h_i=i}}
\binom{k\,\lambda_k}{k\,h_k}\cdots \binom{i\,\lambda_i}{i\,h_i}\le
\binom{k\,\lambda_k+\cdots+i\,\lambda_i}{i}.
\end{equation}
On the other hand, taking into account that the expansion of the
analytic function $h:\C\to\C$,
$h(z):=\prod_{j=k}^i(1+z^j)^{\lambda_j}$ has non-negative
coefficients at $0$, from, e.g., \cite[Proposition IV.1]{FlSe09} we
conclude that
\begin{equation}\label{eq: eta_i - second bound}
\eta_i\le h(1)=2^{\lambda_k+\cdots+\lambda_i}.
\end{equation}
The accuracy of \eqref{eq: eta_i - first bound} and \eqref{eq: eta_i
- second bound} depends on the actual factorization pattern
$(\lambda_1,\ldots,\lambda_e)$. For example, if $g\in\fq[T]_e$ is a
polynomial with an ``equal-degree factorization'' (that is,
$k\lambda_k=e$), then for large $k$ the bound \eqref{eq: eta_i -
second bound} is preferable, while for large $\lambda_k$ the bound
\eqref{eq: eta_i - first bound} is more accurate.

Finally, for the results on the average-case complexity of the
Euclidean algorithm we shall use a further upper bound on
$E[\mathcal{X}_g]$. This bound, although not as precise as the one
of Theorem \ref{teo: average degree mcd}, has a simple expression
which suffices for the purposes of the next section.
\begin{lemma}\label{lemma: average degree}
Let $e, d$ be integers with $e>d>0$ and let $g\in\fq[T]_e$. Then
$$E[\mathcal{X}_g]\le \frac{de}{q^k}.$$
\end{lemma}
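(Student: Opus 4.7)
The plan is to derive the bound directly from the trivial inequality $\mathcal{X}_g\le d$ together with Proposition \ref{prop: estimate B_0}, rather than from the more refined upper bound in Theorem \ref{teo: average degree mcd}. As in the statement, let $k$ be the least index with $\lambda_k^*>0$ (equivalently, the smallest degree of an irreducible factor of $g$). If $k>d$, then no irreducible factor of $g$ can divide any $f\in\fq[T]_d$, so $\gcd(g,f)=1$ for every $f$ and $E[\mathcal{X}_g]=0$, which trivially satisfies the bound. So I may assume $k\le d$.

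Since $\mathcal{X}_g(f)=\deg\gcd(g,f)\le\deg f=d$ for every $f\in\fq[T]_d$, I can estimate
\begin{equation*}
E[\mathcal{X}_g]=\sum_{i=1}^{d}i\,\frac{|B_i|}{q^d}\le d\cdot\frac{|\bigcup_{j=1}^{d}B_j|}{q^d}=d\cdot\mathrm{Prob}(\mathcal{X}_g\ge 1).
\end{equation*}
Proposition \ref{prop: estimate B_0} already yields
\begin{equation*}
\mathrm{Prob}(\mathcal{X}_g\ge 1)\le \frac{\lambda_k^*}{q^k}+\sum_{i=k+1}^{d}\frac{\lambda_i^*}{q^i}=\sum_{i=k}^{d}\frac{\lambda_i^*}{q^i},
\end{equation*}
and bounding $1/q^i\le 1/q^k$ for every $i\ge k$ collapses this to $(\sum_{i=k}^{d}\lambda_i^*)/q^k$.

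The last step is a purely combinatorial bound on $\sum_{i=k}^{d}\lambda_i^*$. Since $\lambda_i^*$ counts distinct irreducible factors of $g$ of degree $i$, their total degree contribution satisfies $\sum_{i=k}^{d}i\lambda_i^*\le\deg g^*\le e$, and because every index $i$ in the sum satisfies $i\ge k$, this gives $\sum_{i=k}^{d}\lambda_i^*\le e/k$. Combining all of the above yields
\begin{equation*}
E[\mathcal{X}_g]\le d\cdot\frac{e/k}{q^k}=\frac{de}{kq^k}\le \frac{de}{q^k},
\end{equation*}
using $k\ge 1$. There is no substantial obstacle here; the argument is essentially a clean bookkeeping exercise on top of Proposition \ref{prop: estimate B_0}, trading the factor $k$ in the numerator coming from the multiplication by degree against the factor $k$ in the denominator coming from the degree constraint $k\lambda_i^*\le i\lambda_i^*$.
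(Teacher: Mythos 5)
Your proof is correct and follows essentially the same route as the paper: bound $E[\mathcal{X}_g]$ by $d$ times the probability that $\gcd(g,f)\neq 1$ and then invoke the upper bound of Proposition \ref{prop: estimate B_0} together with the degree constraint on the factorization pattern. Your version merely adds the explicit treatment of the case $k>d$ and the slightly sharper count $\sum_{i\ge k}\lambda_i^*\le e/k$ (the paper simply uses $\sum_i\lambda_i\le e$), which changes nothing essential.
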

\begin{proof}
Let $(\lambda_1,\ldots,\lambda_e)$ be the factorization pattern of
$g$ and let $k$ be the least index with $\lambda_k>0$. By
Proposition \ref{prop: estimate B_0},

$$E[\mathcal{X}_g]=\sum_{k=1}^d\sum_{j=k}^d\frac{|B_j|}{q^d}\le
\sum_{k=1}^d\sum_{j=1}^d\frac{|B_j|}{q^d} \le d
\Bigg(\frac{\lambda_k}{q^k}+\sum_{i=k+1}^d\frac{\lambda_i}{q^i}\Bigg)
\le \frac{de}{q^k}.$$
\end{proof}
%
%----------------------------------------------------------------------
%----------------------------------------------------------------------
%----------------------------------------------------------------------
%----------------------------------------------------------------------
%----------------------------------------------------------------------
%----------------------------------------------------------------------
%----------------------------------------------------------------------
%----------------------------------------------------------------------
%
\section{Average-case analysis of the Euclidean Algorithm}
\label{sec: average case analysis}
Let $e, d$ be positive integers with $q>d(2e-d+1)/2$ and $e>d$ and
let $g\in \fq[T]_e$ be fixed. In this section we analyze the
average-case complexity of the Euclidean algorithm applied to pairs
$(g,f)$ with $f\in \fq[T]_d$.

Given positive integers $m, n$ with $m > n$ and $(f_1,f_2) \in
\fq[T]_m \times(\fq[T]_n\setminus\{0\})$, and an arithmetic
operation ${\sf w}\in \{\div, -, \times \}$, by $d^{\sf w}(f_1,f_2)$
we denote the number of operations ${\sf w}$ used in the
``synthetic'' polynomial division algorithm applied to $(f_1,f_2)$
(see, e.g., \cite{Knuth81}). It turns out that
\begin{equation}  \label{syntetic operations}
d^{\div}(f_1,f_2)=m-n+1, \quad d^{-, \times}(f_1,f_2)=n(m-n+1).
\end{equation}
Endowing $\fq[T]_d$ with the uniform probability, for any ${\sf
w}\in \{\div, -, \times \}$ we consider the random variable $t^{\sf
w}_g: \fq[T]_d \to \mathbb{N}$ which counts the number of operations
${\sf w}$ that the Euclidean Algorithm performs on input $(g,f)$ for
each $f\in\fq[T]_d$. Furthermore, $t^{\sf div}_g(f)$ denotes the
number of polynomial divisions involved. Our aim is to study the
expected value $E[t^{\sf w}_g]$ of $t^{\sf w}_g$ for ${\sf w}\in
\{\div,{\sf div}, -, \times \}$, namely
$$E[t^{\sf w}_g]=\frac{1}{q^d}\sum_{f\in \fq[T]_d} t^{\sf w}_g(f)
=\frac{1}{q^d}\sum_{k=0}^d\sum_{f\in B_{d-k}}t^{\sf w}_g(f).$$

As explained before, applying the Euclidean algorithm to an input
$(g,f)$ with $f\in \fq[T]_d$ we obtain a unique polynomial quotient
sequence $(q_1 \klk q_{h+1})$ and a unique polynomial remainder
sequence $(r_1 \klk r_h)$ satisfying the following conditions:
\begin{equation}\label{algorithm euclidean}\begin{array}{rclrcl}
 g&=&  f\cdot q_1+r_1, & \deg(r_1)&<&\deg(f),\\
 f&=&r_1\cdot q_2+r_2, & \deg(r_2)&<&\deg(r_1),\\
 &\vdots&&& \vdots \\%\quad\quad\quad \quad\quad\quad \vdots\\
 r_{h-2}&=&r_{h-1}\cdot q_h+r_h,
 &\deg(r_h)&<&\deg(r_{h-1}),\\
 r_{h-1}&=&r_h\cdot q_{h+1}.
 \end{array}\end{equation}
We first consider ${\sf w}=\sf div$.
\begin{lemma}\label{lemma: E t div}
The average number $E[t_g^{\sf div}]$ of polynomial divisions
performed by the Euclidean algorithm applied to pairs $(g,f)$ with
$f\in\fq[T]_d$ is bounded as follows:
$$(d+1)\Bigg(1-\frac{d(2e-d+1)}{2q}\Bigg)\le
E[t_g^{\sf div}]\le (d+1)\Bigg(1+\frac{de}{q}\Bigg).$$
\end{lemma}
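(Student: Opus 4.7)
The plan is to obtain the two bounds from a tight pointwise control of $t_g^{\sf div}(f)$: on one hand, $t_g^{\sf div}(f)\le d+1$ for every $f\in\fq[T]_d$, which immediately yields the upper bound; on the other hand, for a large set of ``generic'' $f$ (in the sense of Proposition \ref{prop: lower bound generic pols}), the bound $t_g^{\sf div}(f)=d+1$ is attained, and restricting the average to this set produces the lower bound.

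First I would observe that, with the notation of \eqref{algorithm euclidean}, the Euclidean algorithm applied to $(g,f)$ performs exactly $h+1$ polynomial divisions, where $h$ is the length of the remainder sequence (including the final division $r_{h-1}=r_h\cdot q_{h+1}$ with zero remainder). Since $\deg(r_1)<d$ and the degrees $\deg(r_i)$ strictly decrease with $i$, we have $h\le d$, hence $t_g^{\sf div}(f)\le d+1$ for every $f\in\fq[T]_d$. Averaging over $\fq[T]_d$ gives $E[t_g^{\sf div}]\le d+1\le (d+1)(1+de/q)$, which settles the upper bound (in fact, with room to spare).

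For the lower bound I would invoke Proposition \ref{prop: lower bound generic pols}. Let $\mathcal{G}\subset\fq[T]_d$ denote the set of monic polynomials of degree $d$ that are generic with respect to $g$. For $f\in\mathcal{G}$, the remainder sequence has length exactly $d$ with $\deg(r_j)=d-j$ for $1\le j\le d$; in particular $r_d$ is a nonzero constant and one more division is needed to certify that it is the gcd, so $t_g^{\sf div}(f)=d+1$. Combining this with the estimate $|\mathcal{G}|\ge q^d\bigl(1-d(2e-d+1)/(2q)\bigr)$ from Proposition \ref{prop: lower bound generic pols} yields
$$E[t_g^{\sf div}]\ge \frac{1}{q^d}\sum_{f\in\mathcal{G}}t_g^{\sf div}(f)=(d+1)\,\frac{|\mathcal{G}|}{q^d}\ge (d+1)\Bigl(1-\frac{d(2e-d+1)}{2q}\Bigr),$$
which is the desired lower bound.

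There is essentially no real obstacle in this lemma: once Proposition \ref{prop: lower bound generic pols} is available, the result is a direct consequence of the pointwise upper bound $t_g^{\sf div}\le d+1$ and the abundance of generic specializations realizing equality. All the difficulty is concentrated in Proposition \ref{prop: lower bound generic pols}, which required controlling the $\fq$-rational zeros of the leading-coefficient polynomials $G_k$ via the degree bounds of Proposition \ref{prop: main}.
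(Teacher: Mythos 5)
Your proof is correct. The lower bound is exactly the paper's argument: restrict the nonnegative sum defining $E[t_g^{\sf div}]$ to the generic set $\mathcal{G}$, note that $t_g^{\sf div}(f)=d+1$ there, and apply Proposition \ref{prop: lower bound generic pols}. Where you genuinely differ is the upper bound. The paper conditions on the degree of the gcd: for $f\in B_{d-k}$ it argues $t_g^{\sf div}(f)\le k+1$, sums this against the class sizes $|B_{d-k}|$ to obtain $E[t_g^{\sf div}]\le d\,E[\mathcal{X}_g]+d+1$, and then invokes the bound $E[\mathcal{X}_g]\le de/q$ of Lemma \ref{lemma: average degree}. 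You instead use the uniform pointwise bound: since the remainder degrees strictly decrease starting from at most $d-1$, the remainder sequence in \eqref{algorithm euclidean} has length $h\le d$ and the algorithm performs $h+1\le d+1$ divisions for every $f\in\fq[T]_d$ (including the case $f\mid g$, where a single division is performed). Averaging gives $E[t_g^{\sf div}]\le d+1$, which is strictly stronger than the stated $(d+1)(1+de/q)$ and bypasses Lemma \ref{lemma: average degree} entirely. So your route is more elementary and yields a sharper constant; what the paper's detour through $E[\mathcal{X}_g]$ buys is not a better bound here but a uniform bookkeeping scheme (cost per class $B_{d-k}$ times class size) that it reuses verbatim for the division and addition/multiplication counts in Lemmas \ref{lemma:E dividido} and \ref{lemma: E w}.
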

\begin{proof}
For $f \in B_{d-k}$ with $0\le k\le d$, we claim that $t_g^{\sf
div}(f) \leq k+1$. Indeed, the maximum number of polynomial
divisions in \eqref{algorithm euclidean} is achieved from a sequence
of remainders of maximum length. Since $f \in B_{d-k}$, in such a
sequence the degree of each successive remainder decreases by 1,
that is, the sequence has length $h=k$. Taking into account that
there is a further division to perform, to check that
$r_h$ divides $r_{h-1}$, we deduce our claim. %On
%the other hand, for $f\in B_0$ we argue as before except that the
%division of $r_{h-1}$ by $r_h$ is not performed
%because $\deg(r_h)=0$.
As $k\mapsto \frac{k+1}{d-k}$ is an increasing function for $k \in
[0,d-1]$, we obtain
\begin{align*}
E[t^{\sf div}_g] \leq \frac{1}{q^d}\sum_{k=0}^d\sum_{f\in
B_{d-k}}(k+1)
&= \frac{1}{q^d} \Bigg( \sum_{k=0}^{d-1} \frac{k+1}{d-k} (d-k)|B_{d-k}|+(d+1)|B_0| \Bigg)\\
& \leq \frac{d}{q^d}\sum_{k=0}^{d-1} (d-k)|B_{d-k}| +(d+1)
\frac{|B_0|}{q^d} \le d\, E[\mathcal{X}_g]+ d+1.
\end{align*}
Using the bound $E[\mathcal{X}_g] \leq de/q$ of Lemma \ref{lemma:
average degree}, we deduce the upper bound in the statement of the
lemma.

Next we show the lower bound. Recall that $f\in\fq[T]_d$ is generic
(with respect to $g$) if the corresponding remainder sequence is of
the form $(r_1,\ldots,r_d)$, where $\deg(r_j)=d-j$ for $1\le j\le
d$. For such an $f$, the number of polynomial divisions is precisely
$d+1$. By Proposition \ref{prop: lower bound generic pols}, it
follows that
$$E[t^{\sf div}_g] \ge \frac{1}{q^d}(d+1)|\mathcal{G}|
\ge (d+1)\Bigg(1-\frac{d(2e-d+1)}{2q}\Bigg).$$
This finishes the proof of the lemma.
\end{proof}

Next we analyze the case ${\sf w}=\div$.
\begin{lemma}\label{lemma:E dividido}
Denote by $E[t_g^{\div}]$ the average number of divisions performed
by the Euclidean algorithm applied to pairs $(g,f)$ with
$f\in\fq[T]_d$. Then
$$(e+d+1)\Bigg(1-\frac{d(2e-d+1)}{2q}\Bigg)\le E[t_g^{\div}]
\le (e+d+1)\Bigg(1+\frac{de}{q}\Bigg).$$
\end{lemma}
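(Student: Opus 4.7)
The plan is to derive an exact identity expressing $t_g^{\div}(f)$ in terms of $t_g^{\sf div}(f)$ and $\mathcal{X}_g(f)$ for each $f\in\fq[T]_d$, and then bound each side of this identity using the previous lemma, Lemma \ref{lemma: average degree}, and Proposition \ref{prop: lower bound generic pols}.

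The key observation is that by \eqref{syntetic operations}, each polynomial division of $u$ by $v$ costs exactly $\deg u-\deg v+1$ divisions in $\fq$. Setting $r_{-1}:=g$ and $r_0:=f$ and letting $(r_1,\ldots,r_h)$ be the remainder sequence in \eqref{algorithm euclidean}, the Euclidean algorithm performs $h+1=t_g^{\sf div}(f)$ polynomial divisions, the $i$-th being of $r_{i-2}$ by $r_{i-1}$. Summing the corresponding $\fq$-division costs and telescoping---using that $r_h=\gcd(g,f)$ has degree $\mathcal{X}_g(f)$---yields
\[
t_g^{\div}(f) = e - \mathcal{X}_g(f) + t_g^{\sf div}(f),
\]
and hence $E[t_g^{\div}] = e - E[\mathcal{X}_g] + E[t_g^{\sf div}]$.

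For the upper bound I would drop the nonnegative term $E[\mathcal{X}_g]$ and apply the upper bound of Lemma \ref{lemma: E t div}, obtaining $e+(d+1)(1+de/q)$, which is at most $(e+d+1)(1+de/q)$ since $d+1\le e+d+1$. For the lower bound I would restrict the average to the set $\mathcal{G}$ of generic polynomials of Proposition \ref{prop: lower bound generic pols}: for $f\in\mathcal{G}$ one has $\mathcal{X}_g(f)=0$ and $t_g^{\sf div}(f)=d+1$, hence $t_g^{\div}(f)=e+d+1$. Since $t_g^{\div}\ge 0$ elsewhere, this gives
\[
E[t_g^{\div}] \ge (e+d+1)\frac{|\mathcal{G}|}{q^d} \ge (e+d+1)\bigg(1-\frac{d(2e-d+1)}{2q}\bigg).
\]

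No step looks like a real obstacle: the telescoping is routine and both averaging bounds follow directly from results already in hand. The only point requiring a bit of care is the boundary bookkeeping in the telescoping sum, to verify that the contribution $e=\deg g$ from the left endpoint and the contribution $\mathcal{X}_g(f)=\deg r_h$ from the right endpoint combine as stated.
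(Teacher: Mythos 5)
Your proposal is correct. The telescoping identity you derive, $t_g^{\div}(f)=\deg g-\deg r_h+(h+1)=e-\mathcal{X}_g(f)+t_g^{\sf div}(f)$, is precisely the paper's starting point (its formula \eqref{t div k}), and your lower bound via the generic set $\mathcal{G}$ of Proposition \ref{prop: lower bound generic pols}, giving $t_g^{\div}(f)=e+d+1$ there, is verbatim the paper's argument. Where you diverge is the upper bound: the paper does not pass to the exact expectation identity $E[t_g^{\div}]=e-E[\mathcal{X}_g]+E[t_g^{\sf div}]$; instead it bounds $t_g^{\div}(f)\le e-d+2k+1$ on each class $B_{d-k}$ and repeats the weighted-sum device from Lemma \ref{lemma: E t div} (monotonicity of $k\mapsto\frac{e-d+2k+1}{d-k}$ on $[0,d-1]$) to get $E[t_g^{\div}]\le(e+d-1)E[\mathcal{X}_g]+e+d+1$, then invokes Lemma \ref{lemma: average degree}. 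Your route simply drops the nonpositive term $-E[\mathcal{X}_g]$ and cites the upper bound of Lemma \ref{lemma: E t div} already in hand, using $d+1\le e+d+1$ to absorb the error term; this is slightly cleaner (it avoids redoing the monotonicity argument and in fact yields the marginally sharper intermediate bound $E[t_g^{\div}]\le e+d+1+(d-1)E[\mathcal{X}_g]$), while the paper's version keeps each lemma self-contained and makes the dependence on $E[\mathcal{X}_g]$ explicit. Your closing remark about boundary bookkeeping is the only point of care, and it checks out: with $r_{-1}:=g$, $r_0:=f$, the $h+1$ divisions cost $\sum_{i=1}^{h+1}(\deg r_{i-2}-\deg r_{i-1}+1)$, which telescopes to $e-\deg r_h+h+1$, and $\deg r_h=\mathcal{X}_g(f)$ since $r_h$ is a scalar multiple of $\gcd(g,f)$.
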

\begin{proof}
Let $f\in B_{d-k}$ with $0\le k\le d$. According to \eqref{syntetic
operations}, the number of operations ${\div}$ in each step of
\eqref{algorithm euclidean} is
\begin{align*}
d^{\div}(g,f)&=\deg(g)-\deg(f)+1,\\
d^{\div}(f, r_1)&=\deg(f)-\deg(r_1)+1,\\
&\ \,\vdots  \\
d^{\div}(r_{h-1}, r_h), &=\deg(r_{h-1})-\deg(r_h)+1.
\end{align*}
Therefore,
\begin{equation}\label{t div k}
t_g^{\div}(f) =\deg(g)-\deg(r_h)+h+1 =e-(d-k)+h+1\le e-d+2k+1.
\end{equation}
%
%On the other hand, for $f \in B_0$, the number of steps in
%\eqref{algorithm euclidean} is at most $d$. Taking into account that
%the last division in \eqref{algorithm euclidean} is not performed,
%arguing as before we see that
%%
%\begin{equation}\label{t div 0}
%t_g^{\div}(f) \leq \deg(g)-\deg(r_{d-1})+d = e+d+1.
%\end{equation}
%
As $k\mapsto\frac{e-d+2k+1}{d-k}$ is increasing for $k \in [0,d-1]$,
from \eqref{t div k} %and \eqref{t div 0}
we deduce that
\begin{align*}
E[t^{\div}_g]= \frac{1}{q^d}\sum_{k=0}^d \sum_{f\in
B_{d-k}}t^{\div}_g(f) &\leq \frac{1}{q^d}  \sum_{k=0}^{d-1}
\frac{e-d+2k+1}{d-k}
(d-k)|B_{d-k}|+(e+d+1)\frac{|B_0|}{q^d}\notag\\
& \leq (e+d-1)E[\mathcal{X}_g]+e+d+1.
\end{align*}
Combining this with Lemma \ref{lemma: average degree} readily
implies the upper bound.

To prove the lower bound, we argue as in the proof of Lemma
\ref{lemma: E t div}. For a generic $f\in\mathcal{G}$, the remainder
sequence is of length $d$, and therefore $t_g^{\div}(f) =e+d+1$. It
follows that
$$E[t^{\div}_g] \ge \frac{1}{q^d}(e+d+1)|\mathcal{G}|
\ge (e+d+1)\Bigg(1-\frac{d(2e-d+1)}{2q}\Bigg).$$
This proves the lemma.
\end{proof}

Finally, we consider the remaining case ${\sf w} \in \{-, \times\}$.
We have the following result.
\begin{lemma}\label{lemma: E w}
Let $E[t^{-, \times}_g]$ be the average number of operations ${\sf
w}\in\{-,\times\}$ performed by the Euclidean algorithm applied to
pairs $(g,f)$ with $f\in\fq[T]_d$. Then
$$de\Bigg(1-\frac{d(2e-d+1)}{2q}\Bigg)\le E[t^{-, \times}_g]\le
de\Bigg(1+\frac{de}{q}\Bigg).$$
\end{lemma}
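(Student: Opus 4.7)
The plan is to establish the universal pointwise bound $t^{-,\times}_g(f) \le de$ for every $f \in \fq[T]_d$, which gives the upper bound in the lemma for free, and to extract the lower bound from the generic polynomials of Proposition~\ref{prop: lower bound generic pols}.

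For the pointwise upper bound I would prove the slightly more general claim by induction on $\deg p_2$: for any pair $p_1, p_2 \in \fq[T]$ with $\deg p_1 = a > \deg p_2 = b \ge 0$, the Euclidean algorithm on $(p_1, p_2)$ uses at most $ab$ operations in $\{-, \times\}$. The base case $b = 0$ is immediate from \eqref{syntetic operations} since the single division contributes cost $0$. For the inductive step, the first synthetic division contributes $b(a-b+1)$ operations by \eqref{syntetic operations}; if the resulting remainder has degree $c$, then $c \le b-1$, and the algorithm proceeds on a pair of degrees $b > c$, which by the inductive hypothesis costs at most $bc$. Summing,
\[
b(a-b+1) + bc = b(a-b+1+c) \le ba,
\]
using $c \le b-1$. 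Applied to the input $(g,f)$, this yields $t^{-,\times}_g(f) \le de$ for every $f \in \fq[T]_d$, and therefore
\[
E[t^{-,\times}_g] \;\le\; de \;\le\; de\bigg(1 + \frac{de}{q}\bigg).
\]

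For the lower bound I would argue as in the proofs of Lemmas~\ref{lemma: E t div} and~\ref{lemma:E dividido}. If $f \in \mathcal{G}$ is generic with respect to $g$, then the remainder sequence $(r_1, \dots, r_d)$ has length $d$ with $\deg r_i = d-i$. Writing $r_0 := f$ and using the decomposition
\[
t^{-,\times}_g(f) \;=\; d(e-d+1) + \sum_{i=1}^{h}\deg(r_i)\bigl(\deg(r_{i-1}) - \deg(r_i) + 1\bigr),
\]
this specialises to $t^{-,\times}_g(f) = d(e-d+1) + \sum_{i=1}^{d} 2(d-i) = d(e-d+1) + d(d-1) = de$. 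Combined with the bound $|\mathcal{G}| \ge q^d\bigl(1 - d(2e-d+1)/(2q)\bigr)$ from Proposition~\ref{prop: lower bound generic pols}, this gives
\[
E[t^{-,\times}_g] \;\ge\; \frac{de\,|\mathcal{G}|}{q^d} \;\ge\; de\bigg(1 - \frac{d(2e-d+1)}{2q}\bigg),
\]
finishing the proof.

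The main technical content is the inductive pointwise bound $t^{-,\times}_g(f) \le de$; once that is in hand, both directions of the lemma follow essentially in one line each. As a side remark, this argument shows that the factor $(1 + de/q)$ in the stated upper bound is slack: one really has $E[t^{-,\times}_g] \le de$, matching the value attained exactly on $\mathcal{G}$.
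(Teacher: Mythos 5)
Your proof is correct, and the lower-bound half is exactly the paper's argument: genericity forces the degree pattern $(d-1,\ldots,0)$, the cost on $\mathcal{G}$ is exactly $de$, and Proposition~\ref{prop: lower bound generic pols} does the rest. Where you genuinely diverge is the upper bound. The paper does not prove a uniform pointwise bound; instead, for $f\in B_{d-k}$ it shows via an exchange argument on degree patterns (replacing a degree drop larger than $1$ by two consecutive unit drops and checking the cost can only increase) that $t^{-,\times}_g(f)\le d(e-d+1)+k(2d-k-1)$, and then combines a monotonicity-in-$k$ trick with the estimate $E[\mathcal{X}_g]\le de/q$ of Lemma~\ref{lemma: average degree} to reach $E[t^{-,\times}_g]\le de\,E[\mathcal{X}_g]+de\le de(1+de/q)$. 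Your induction on $\deg p_2$, giving cost at most $ab$ for any input pair of degrees $a>b$, is cleaner and in fact stronger: it yields $E[t^{-,\times}_g]\le de$ outright, makes the appeal to Lemma~\ref{lemma: average degree} unnecessary for this lemma, and your closing remark that the factor $(1+de/q)$ is slack is accurate. What the paper's per-class bound buys instead is cost information stratified by $\deg\gcd(g,f)$, parallel to the structure of the proofs of Lemmas~\ref{lemma: E t div} and~\ref{lemma:E dividido}; your bound discards that refinement but it is not needed for the stated inequality. One tiny point to patch in your induction: handle the case where the first remainder is zero (the algorithm stops and the cost is $b(a-b+1)\le ab$ since $b\ge1$), so that the recursive call is only invoked for a nonzero remainder; with that sentence added the argument is complete.
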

\begin{proof}
For $f\in B_{d-k}$ with $0\le k\le d$, by \eqref{syntetic
operations} the number of operations $d^{\sf w}$ with ${\sf
w}\in\{-,\times\}$ in each step of \eqref{algorithm euclidean} is
\begin{align*}
d^{\sf w}(g,f)&=\deg(f)(\deg(g)-\deg(f)+1),\\
d^{\sf w}(f, r_1)&=\deg(r_1)(\deg(f)-\deg(r_1)+1),\\
&\ \,\vdots  \\
d^{\sf w}(r_{h-1}, r_h)&=\deg(r_h)(\deg(r_{h-1})-\deg(r_h)+1) .
\end{align*}

Denote $r_0:=f$. We claim that the maximum number of operations
${\sf w}$ performed in the whole Euclidean algorithm is achieved
with a sequence of remainders $(r_0,\ldots,r_k)$ with
$\deg(r_{j-1})-\deg(r_j)=1$ for $1\le j\le k$. Indeed, let
$(r_0,\ldots,r_h)$ be a remainder sequence such that
$\deg(r_{j-1})-\deg(r_j)>1$ for a given $j$. Denote by
$(\alpha_0,\ldots,\alpha_h)$ the corresponding sequence of degrees.
We compare the number of operations ${\sf w}$ performed by the
Euclidean algorithm to obtain this sequence with that of a remainder
sequence
%$(\mathcal{S}_0,\ldots,\mathcal{S}_{j-1},\mathcal{S}_j^*,
%\mathcal{S}_j,\ldots\mathcal{S}_h)$
with degree pattern $(\alpha_0,\ldots,
\alpha_{j-1},\alpha_j^*,\alpha_j,\ldots,\alpha_h)$, where
$\alpha_{j-1}-\alpha_j^*=1$. Since the number of {\sf w} operations
is determined by the degree pattern of the remainder sequence under
consideration, it suffices to compare the cost of the $j$th step of
the first sequence with the sum of those of the $j$th and $(j+1)$th
steps of the second sequence. In particular, we see that our claim
for this case holds provided that
$$
\alpha_j\big((\alpha_{j-1}-\alpha_j)+1\big)\le
\alpha_j^*(\alpha_{j-1}-\alpha_j^*+1)+\alpha_j(\alpha_j^*-\alpha_j+1).
$$
This can be checked by an easy calculation. Arguing successively in
this way, the claim follows.

As a consequence, the maximum number of operations ${\sf w}$
performed is achieved in a sequence of $k$ remainders
$(r_1,\ldots,r_k)$ with $\deg(r_{j-1})-\deg(r_j)=1$ for $1\le j\le
k$, namely with $\deg(r_j)=d-j$ for $1\le j\le k$. It follows that
\begin{align}%\label{t w k}
t_g^{\sf w}(f)& \le
\deg(f)(\deg(g)-\deg(f)+1)+\sum_{j=1}^k\deg(r_j)(\deg(r_{j-1})-\deg(r_j)+1) \notag\\
&  = d(e-d+1)+2\sum_{j=1}^k(d-j)=d(e-d+1)+k(2d-k-1).\label{t w k}
\end{align}
%
%On the other hand, for $f \in B_0$, the maximum number of {\sf w}
%operations is achieved by a sequence
%$(\mathcal{R}_1,\ldots,\mathcal{R}_k)$ whose degree decreases by 1
%at each step. Nevertheless, as $\deg(R_d)=0$, the last step is not
%performed. Therefore,
%\begin{align*}
%d^{\sf w}(g,f)&=\deg(f)(\deg(g)-\deg(f)+1) \\
%d^{\sf w}(f, \mathcal{R}_1)&=\deg(\mathcal{R}_1)(\deg(f)-\deg(\mathcal{R}_1)+1)=2\deg(\mathcal{R}_1)\\
%\vdots & \\
%d^{\sf w}(\mathcal{R}_{h-2}, \mathcal{R}_{h-1})&=\deg(\mathcal{R}_{h-1})
%(\deg(\mathcal{R}_{h-2})-\deg(\mathcal{R}_{h-1})+1))=2\deg(\mathcal{R}_{h-1}).
%\end{align*}
%
%\begin{equation}\label{t w 0}
%t_g^{\sf w}(f) \leq
%\deg(f)(\deg(g)-\deg(f)+1)+2\sum_{j=1}^{d-1}(d-j)=de.
%\end{equation}
%
Since $k\mapsto\frac{d(e-d+1)+k(2d-k-1)}{d-k} $ is increasing for $k
\in[0,d-1]$, by \eqref{t w k} %and \eqref{t w 0}
we obtain
\begin{align*}
E[t^{\sf w}_g]&= \frac{1}{q^d}\sum_{k=0}^d\sum_{f\in B_{d-k}}t^{\sf
w}_g(f) \\
&\leq \frac{1}{q^d} \sum_{k=0}^{d-1} \frac{d(e-d+1)+k(2d-k-1)}{d-k} (d-k)|B_{d-k}|+ de\frac{|B_0|}{q^d}\\
& \leq de\, E[\mathcal{X}_g] +de.
\end{align*}
The upper bound follows easily by Theorem \ref{teo: average degree
mcd}.

On the other hand, for $f\in \mathcal{G}$, by \eqref{t w k} we
conclude that $t^{\sf w}_g(f)=de$. Then we have
$$E[t^{\div}_g] \ge \frac{1}{q^d}de|\mathcal{G}|
\ge de\Bigg(1-\frac{d(2e-d+1)}{2q}\Bigg),$$
which finishes the proof of the lemma.
\end{proof}

Summarizing Lemmas \ref{lemma: E t div}, \ref{lemma:E dividido} and
\ref{lemma: E w}, we have the following result.
\begin{theorem}\label{teo: average cost mcd}
Let $e, d$ be positive integers such that $q>d(2e-d+1)/2$ and $e>d$.
Let $g \in \fq[T]_e$ and ${\sf w} \in \{ \div, {\sf div}, -,
\times\}$. The average cost $E[t_g^{\sf w}]$ of operations ${\sf w}$
performed on (uniform distributed) inputs from $\fq[T]_d$ is bounded
in the following way:
$$
\bigg|\frac{E[t^{\sf div}_g]}{d+1}-1\bigg| \le\frac{de}{q},\qquad
\bigg|\frac{E[t^{\div}_g]}{e+d+1}-1\bigg|\le \frac{de}{q}, \qquad
\bigg|\frac{E[t^{-, \times}_g]}{de}-1\bigg|\le \frac{de}{q}.
$$
\end{theorem}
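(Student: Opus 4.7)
The plan is to observe that this theorem is simply a uniform summary of the three preceding lemmas (Lemma \ref{lemma: E t div}, Lemma \ref{lemma:E dividido}, Lemma \ref{lemma: E w}), so the entire proof reduces to combining those two-sided bounds and replacing the lower-bound constant with the (slightly weaker, but symmetric) upper-bound constant.

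Concretely, for each ${\sf w}\in\{{\sf div},\div,-,\times\}$, the relevant lemma produces inequalities of the form
$$
C_{\sf w}\Bigg(1-\frac{d(2e-d+1)}{2q}\Bigg)\le E[t_g^{\sf w}]\le C_{\sf w}\Bigg(1+\frac{de}{q}\Bigg),
$$
where $C_{\sf w}$ is $d+1$, $e+d+1$, or $de$ respectively. Dividing by $C_{\sf w}$ and subtracting $1$, I would then use the elementary inequality
$$
\frac{d(2e-d+1)}{2}=de-\frac{d(d-1)}{2}\le de,
$$
valid for $d\ge 1$, to bound both the positive and negative deviations uniformly by $de/q$. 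This yields $|E[t_g^{\sf w}]/C_{\sf w}-1|\le de/q$, which is exactly the stated inequality.

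There is no hard step here: the substantive content lies in the three lemmas themselves, and the theorem is a packaging statement. The only thing to be careful about is the routine arithmetic check $d(2e-d+1)/2\le de$; since the hypothesis $q>d(2e-d+1)/2$ ensures that the lower-bound factor in each lemma is positive and in particular at most $de/q<1$, the absolute value formulation is well-defined and the three inequalities follow simultaneously.
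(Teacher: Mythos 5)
Your proposal is correct and matches the paper's own treatment: the paper states Theorem \ref{teo: average cost mcd} simply by ``summarizing'' Lemmas \ref{lemma: E t div}, \ref{lemma:E dividido} and \ref{lemma: E w}, exactly as you do, with the only arithmetic content being the observation $d(2e-d+1)/2=de-d(d-1)/2\le de$ so that both deviations are absorbed into $de/q$. (Your aside that $de/q<1$ is not guaranteed by the hypothesis, but it is also not needed for the stated inequalities.)
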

%
%----------------------------------------------------------------------
%----------------------------------------------------------------------
%----------------------------------------------------------------------
%----------------------------------------------------------------------
%----------------------------------------------------------------------
%----------------------------------------------------------------------
%----------------------------------------------------------------------
%----------------------------------------------------------------------
%
\section{Simulations on test examples}
\label{section: simulations}
In this section we report on the simulations we made with the
software package \texttt{Maple}. More precisely, for given values of
$q$, $e$ and $d$ with $e>d$, we executed the Euclidean algorithm on
pairs $(g,f)$, where $g \in \fq[T]_e$ was a fixed polynomial with
factorization pattern $(\lambda_1 \klk \lambda_e)\in
\mathbb{Z}_{\geq 0}^e$ and $f$ ran through all the elements of a
random sample $ \mathcal{S} \subset \fq[T]_d$. The aim was to
analyze to what extent the results of our simulations behaved as
predicted by the theoretical results on the average degree of
$\gcd(g,f)$ (Theorem \ref{teo: average degree mcd}), the probability
that $\gcd(g,f)=1$ (Theorem \ref{teo: prob coprime}) and the
probability that a random $f\in\fq[T]_d$ is ``generic'' with respect
to $g$ (Proposition \ref{prop: lower bound generic pols}).

Recall that, given $g\in \fq[T]_e$, we denote by $E[\mathcal{X}_g]$
the average degree of $\gcd(g,f)$ for $f$ running on all the
elements of $\fq[T]_d$. Further, the probability that $\gcd(g,f)=1$
is denoted by $\mathcal{P}_0$. Finally, we denote by
$\mathcal{P}_\mathcal{G}$ the probability that $f\in\fq[T]_d$ is
generic with respect to $g$. According to Theorems \ref{teo: average
degree mcd} and \ref{teo: prob coprime} and Proposition \ref{prop:
lower bound generic pols}, if the square-free part $g^*$ of $g$ has
factorization pattern $(\lambda_1^* \klk \lambda_e^*)$
%$g$ has factorization pattern $(\lambda_1 \klk \lambda_e)$
and $k \leq d$ is the least index with
$\lambda_k^*>0$, then
$$E[\mathcal{X}_g] \approx {\tt E}_g:=\frac{k\lambda_k^*}{q^k},\quad
\mathcal{P}_0\approx {\tt P}_0:=1-\frac{\lambda_k^*}{q^k},\quad
\mathcal{P}_{\mathcal{G}}\geq {\tt P}_\mathcal{G}:=1 -
\frac{d(2e-d+1)}{2q}.
$$

The simulations we exhibit were aimed to test whether the right-hand
side in the previous expressions approximates the left-hand side on
the random samples under consideration.  For this purpose, given a
random sample $\mathcal{S} \subset \fq[T]_d$,
we computed the sample means
$$\mu:=\frac{1}{|\mathcal{S}|} \sum_{f \in \mathcal{S}} \deg\gcd(g,f),
\quad \beta:=\frac{|B_{0,s}|}{|\mathcal{S}|},\quad
\gamma:=\frac{|\mathcal{G}_s|}{|\mathcal{S}|},$$
where $B_{0,\mathcal{S}}:=\{f \in \mathcal{S}: \gcd(g,f)=1\}$ and
$\mathcal{G}_\mathcal{S}:=\{f \in \mathcal{S}: \,f \mbox{ is
generic} \}$.
Furthermore, we considered the corresponding relative errors
$$\varepsilon_1:=\frac{|\mu -{\tt E}_g|}{{\tt E}_g}, \quad \varepsilon_2=\frac{|\beta-{\tt P}_0|}{{\tt P}_0}.$$
%
%----------------------------------------------------------------------
%----------------------------------------------------------------------
%----------------------------------------------------------------------
%----------------------------------------------------------------------
%
\subsection{Examples for $q=67$, $e=7$, $d=3$ with $\lambda_1^* >0$}
Our first simulations concerned random samples $\mathcal{S}$ of
300000 polynomials $f\in\mathbb{F}_{67}[T]$ of degree at most $d:=3$
and polynomials $g\in\mathbb{F}_{67}[T] $ of degree $e:=7$ with
distinct values of $\lambda_1^*>0 $, listed in the first column of
Table \ref{table: 67, 7, 3}.
%
%\begin{center}
%    \begin{table}[h]
%        \caption{Examples with $q=67$, $e=7$ and $d=3$.}
%                \label{table: 67, 7, 3}
%        \begin{tabular}{|c|c|c|c|c|c|c|c|c|}
%            \hline
%            $\lambda_1$ & $\mu$ & ${\tt E}_g$ & $\beta$  & ${\tt P}_0$ &$\gamma$  & ${\tt P}_\mathcal{G}$  & $\varepsilon_1$ & $\varepsilon_2$ \\
%            \hline
%            %           $0$& $0.0003933$ & & $0.9998033$ & & $0.9555933$ & & & \\
%            %           \hline
%            $1$& $0.015273$ &$0.014925$  & $0.984963$ & $0.985075$  &$0.941560$  & $0.731343$ & $0.023312$& $0.000011$ \\
%            \hline
%            $2$& $0.030590$ & $0.029851$ & $0.969647$  & $0.970149$ & $0.927223$ & $0.731343$ & $0.024756$ & $0.000049$ \\
%            \hline
%            $3$& $0.045027$  & $0.044776$ & $0.956067$  & $0.955224$ &$0.914517$  & $0.731343$ & $0.000561$ & $0.000083$ \\
%            \hline
%            $4$& $0.059633$ & $0.059701$ & $0.942077$  & $0.940299$ &$0.900570$  & $0.731343$ & $0.000114$ &$0.000189$ \\
%            \hline
%            $5$& $0.074163$ & $0.074627$ & $0.928270$ & $0.925373$& $0.887490$  &$0.731343$ & $0.000622$ & $0.000313$ \\
%            \hline
%            $6$& $0.089130$  & $0.089552$ & $0.914273$  & $0.910448$ & $0.873877$  &$0.731343$ & $0.000471$&$0.000420$ \\
%            \hline
%            $7$& $0.103867$ & $0.104478$ & $0.900650$ & $0.895522$ & $0.860893$  &$0.731343$ & $0.000585$ & $0.000573$\\
%            \hline
%        \end{tabular}
%
%
%
%
%
%    \end{table}
%
%\end{center}
\newpage
\begin{center}
    \begin{table}[h]
        \caption{Examples with $q=67$, $e=7$ and $d=3$.}
        \label{table: 67, 7, 3}
        \begin{tabular}{|c|c|c|c|c|c|c|c|c|}
            \hline
            $\lambda_1^{*}$ & $\mu$ & ${\tt E}_g$ & $\beta$  & ${\tt P}_0$ &$\gamma$  & ${\tt P}_\mathcal{G}$  & $\varepsilon_1$ & $\varepsilon_2$ \\
            \hline
            %           $0$& $0.0003933$ & & $0.9998033$ & & $0.9555933$ & & & \\
            %           \hline
            $1$& $0.015273$ &$0.014925$  & $0.984963$ & $0.985075$  &$0.941560$  & $0.731343$ & $0.023312$& $0.000011$ \\
            \hline
            $2$& $0.030590$ & $0.029851$ & $0.969647$  & $0.970149$ & $0.927223$ & $0.731343$ & $0.024756$ & $0.000049$ \\
            \hline
            $3$& $0.045027$  & $0.044776$ & $0.956067$  & $0.955224$ &$0.914517$  & $0.731343$ & $0.000561$ & $0.000083$ \\
            \hline
            $4$& $0.059633$ & $0.059701$ & $0.942077$  & $0.940299$ &$0.900570$  & $0.731343$ & $0.000114$ &$0.000189$ \\
            \hline
            $5$& $0.074163$ & $0.074627$ & $0.928270$ & $0.925373$& $0.887490$  &$0.731343$ & $0.000622$ & $0.000313$ \\
            \hline
            $6$& $0.089130$  & $0.089552$ & $0.914273$  & $0.910448$ & $0.873877$  &$0.731343$ & $0.000471$&$0.000420$ \\
            \hline
            $7$& $0.103867$ & $0.104478$ & $0.900650$ & $0.895522$ & $0.860893$  &$0.731343$ & $0.000585$ & $0.000573$\\
            \hline
        \end{tabular}
    \end{table}
\end{center}

\subsection{Examples for $q=127$, $e=9$, $d=4$ with $\lambda_1^* >0$}
Next we considered random samples $\mathcal{S}$ of 10000000
polynomials $f\in\mathbb{F}_{127}[T]$ of degree at most $d:=4$. We
considered polynomials $g\in\mathbb{F}_{127}[T] $ of degree $e:=9$
with $\lambda_1^* >0 $ distinct roots. The corresponding results are
listed in Table \ref{table: 127, 9, 4}.

    \begin{center}
    \begin{table}[h]
        \caption{Examples with $q=127$, $e=9$ and $d=4$.}
        \label{table: 127, 9, 4}
        \begin{tabular}{|c|c|c|c|c|c|c|c|c|}
            \hline
            \!$\lambda_1^{*}$\! & $\mu$ & ${\tt E}_g$ & $\beta$  & ${\tt P}_0$ &$\gamma$  & ${\tt P}_\mathcal{G}$  & $\varepsilon_1$ & $\varepsilon_2$ \\
            \hline
            %           $0$& $0.00013500$  & & $0.99993250$ & & $0.96879100$  & & & \\
            %           \hline
            $1$& $0.008006$ & $0.007874$ & $0.992061$ & $0.992126$ & $0.961086$ & $0.763779$ & $0.024384$ & $0.000006$ \\
            \hline
            $2$& $0.015706$ & $0.015748$ & $0.984357$ & $0.984252$& $0.953689$  & $0.763779$ & $0.000267$ & $0.000011$ \\
            \hline
            $3$& $0.023653$ &$0.023622$ &$0.976539$  &$0.976378$ &$0.946121$ &$0.763779$ & $0.023591$ & $0.000016$ \\
            \hline
            $4$& $0.031474$  & $0.031496$ & $0.968891$  & $0.968504$ & $0.938736$ & $0.763779$& $0.000069$ & $0.000039$ \\
            \hline
            $5$& $0.039371$ & $0.039370$&$0.961239$  & $0.960629$ & $0.931346$ & $0.763779$ & $0.000003$ & $0.000064$ \\
            \hline
            $6$& $0.047185$ & $0.047244$ & $0.953743$ &$0.952756$ & $0.924084$ & $0.763779$ & $0.000125$ & $0.000104$ \\
            \hline
            $7$& $0.055216$ &$0.055118$ & $0.946135$  & $0.944882$& $0.916689$  & $0.763779$ & $0.000178$ & $0.000133$ \\
            \hline
            $8$& $0.062906$ & $0.062992$ & $0.938873$ & $0.937008$ & $0.909605$ & $0.763779$ & $0.000137$ & $0.000199$\\
            \hline
            $9$& $0.070742$ &$0.070866$ & $0.931443$  &$0.929133$ & $0.902504$ & $0.763779$ & $0.000175$ & $0.010884$ \\
            \hline
        \end{tabular}
    \end{table}
\end{center}

\subsection{Examples for $q=409$, $e=9$, $d=4$ with $\lambda_1^* >0$}
For our third family of examples we considered random samples
$\mathcal{S}$ of 10000000 polynomials $f\in\mathbb{F}_{409}[T]$ of
degree at most $d:=4$ and polynomials $g\in\mathbb{F}_{409}[T] $ of
degree $e:=9$ with $\lambda_1^* >0$. The corresponding results are
listed in Table \ref{table: 409, 9, 4}.
\newpage
\begin{center}
    \begin{table}[h]
        \caption{Examples with $q=409$, $e=9$ and $d=4$.}
        \label{table: 409, 9, 4}
        \begin{tabular}{|c|c|c|c|c|c|c|c|c|}
            \hline
            \!$\lambda_1^{*}$\! & $\mu$ & ${\tt E}_g$ & $\beta$  & ${\tt P}_0$ &$\gamma$  & ${\tt P}_\mathcal{G}$  & $\varepsilon_1$ & $\varepsilon_2$ \\
            \hline
            %       $0$& $0$ &$0$  & $1$ &$1$  & &  $0$ & $0$   &  \\
            %       \hline
            %       $0$& $0.0000148$ & $0$  &$0.9999926$  &$1$ & & $0.0000148$  &  $0.00000074$ &    \\
            %       \hline
            1& $0.002457$  &$0.002445$  &$0.997549$  &$0.997555$  & $0.987825$ & $0.926650$ & $0.000491$  & $0.000001$  \\
            \hline
            1& $0.002474$  & $0.002445$ &$0.997526$  &$0.997555$  & $0.987809$ &$0.926650$  & $0.011861$  & $0.000003$  \\
            \hline
            2& $0.004941$ & $0.004889$  &$0.995065$  &$0.995110$  &$0.985414$ & $0.926650$   &  $0.010636$& $0.044678$   \\
            \hline
            2& $0.004902$  & $0.004889$ & $0.995109$ & $0.995110$ & $0.985340$  &  $0.926650$  & $0.000268$  & $1 \cdot 10^{-6}$ \\
            \hline
            3& $0.007329$ &$0.007335$  & $0.992688$  &$0.992665$  & $0.983014$  & $0.926650$ &$0.000082$  & $0.000002$ \\
            \hline
            3& $0.007309$ &$0.007335$  &$0.992710$  &$0.992665$  & $0.983031$ & $0.926650$ &$0.00035$  & $0.000005$ \\
            \hline
            4& $0.009772$ & $0.009779$ & $0.99026$ &$0.990220$  &$0.980597$  & $0.926650$ &$0.000072$  & $0.00004$ \\
            \hline
            4& $0.009761$  &$0.009779$  &$0.990277$  & $0.990220$ & $0.980693$  & $0.926650$ &$0.000184$  & $0.000006$  \\
            \hline
            5& $0.012245$ &$0.012225$  &$0.987821$  &$0.987776$  & $0.978268$ & $0.926650$ &$0.000164$ & $0.002785$ \\
            \hline
            5& $0.012241$ &$0.012225$  & $0.987836$ & $0.987776$ & $0.978206$ & $0.926650$ &$0.000131$  & $0.000006$ \\
            \hline
            6& $0.014649$ &$0.014669$  &$0.985448$  &$0.985331$  & $0.975835$  & $0.926650$ &$0.000136$  & $0.000012$  \\
            \hline
            7& $0.017035$ & $0.017115$ & $0.983098$  & $0.982885$ & $0.973513$ & $0.926650$ &$0.000467$ &  $0.000022$ \\
            \hline
            8& $0.019524$ & $0.019552$  & $0.980654$  & $0.980440$  & $0.971072$  & $0.926650$ &$0.000143$  & $0.000022$ \\
            \hline
            9& $0.021948$ & $0.022005$ & $0.978267$ & $0.977995$  & $0.968726$  & $0.926650$  &$0.000278$ &$0.000028$  \\
            \hline
        \end{tabular}
    \end{table}

\end{center}
\subsection{Examples for $q=67$, $e=7$, $d=3$ with $\lambda_k^* >0$}
Now we report on random samples $\mathcal{S}$ of 300000 polynomials
$f\in\mathbb{F}_{67}[T]$ of degree at most $d:=3$, and polynomials
$g\in\mathbb{F}_{67}[T] $ of degree $e:=7$ having different values
for the least index $k$ with $\lambda_k^*>0$. In the first column of
Table \ref{table: 67, 7, 3, k>0} we show the different values of $k$
considered, while the second column exhibits the corresponding
values of $\lambda_k^*$. As the sample mean $\mu$ and the asymptotic
estimates ${\tt E}_g$ were close to zero, instead of the relative
error we considered the absolute error
$$\varepsilon_1:={|\mu - {\tt E}_g|},$$
listed in the ninth column of Table \ref{table: 67, 7, 3, k>0}.
%
%\begin{center}
%    \begin{table}[h]
%        \caption{Examples with $q=67$, $e=7$ and $d=3$.}
%            \label{table: 67, 7, 3, k>0}
%        \begin{tabular}{|c|c|c|c|c|c|c|c|c|c|}
%            \hline
%            $k$ & \!$\lambda_k$\!  & $\mu$ & ${\tt E}_g$ & $\beta$  & ${\tt P}_0$ &$\gamma$  & ${\tt P}_\mathcal{G}$  & $\varepsilon_1$ & $\varepsilon_2$ \\
%            \hline
%            2& $1$ & $0.000453$ &$0.000445$  & $0.999773$ & $0.999777$  & $0.955343$  & $0.731343$ & $0.000008$ & $3 \cdot 10^{-7}$\\
%            \hline
%            2& $2$ & $0.000483$ & $0.000891$ & $0.999763$& $0.999555$  &$0.955360$ & $0.731343$& $0.000408$ & $0.000004$ \\
%            \hline
%            3& $1$ & $0.000030$ & $0.000001$ &$0.999990$ &  $0.999997$ &$0.955866$ & $0.731343$ & $0.000029$ &$0.000007$ \\
%            \hline
%            3& $1$ & $0.000030$ & $0.000001$ & $0.999990$ & $0.999997$ &$0.956393$ & $0.731343$ & $0.000029$ & $0.000009$ \\
%            \hline
%
%        \end{tabular}
%    \end{table}
%\end{center}
\begin{center}
    \begin{table}[h]
        \caption{Examples with $q=67$, $e=7$ and $d=3$.}
        \label{table: 67, 7, 3, k>0}
        \begin{tabular}{|c|c|c|c|c|c|c|c|c|c|}
            \hline
            $k$ & \!$\lambda_k^{*}$\!  & $\mu$ & ${\tt E}_g$ & $\beta$  & ${\tt P}_0$ &$\gamma$  & ${\tt P}_\mathcal{G}$  & $\varepsilon_1$ & $\varepsilon_2$ \\
            \hline
            2& $1$ & $0.000453$ &$0.000445$  & $0.999773$ & $0.999777$  & $0.955343$  & $0.731343$ & $0.000008$ & $0.000004$\\
            \hline
            2& $2$ & $0.000836$ & $0.000891$ & $0.999586$& $0.999555$  &$0.0955466$ & $0.731343$& $0.000055$ & $0.000003$ \\
            \hline
            3& $1$ & $0.000030$ & $0.000001$ &$0.999990$ &  $0.999997$ &$0.955866$ & $0.731343$ & $0.000029$ &$0.000007$ \\
            \hline
            3& $1$ & $0.000030$ & $0.000001$ & $0.999990$ & $0.999997$ &$0.956393$ & $0.731343$ & $0.000029$ & $0.000009$ \\
            \hline

        \end{tabular}
    \end{table}
\end{center}

\subsection{Examples for $q=127$, $e=9$, $d=4$ with $\lambda_k^* >0$}
The next family of examples concerned random samples $\mathcal{S}$
of 10000000 polynomials $f\in\mathbb{F}_{127}[T]$ of degree at most
$d:=3$. We considered polynomials $g\in\mathbb{F}_{127}[T] $ of
degree $e:=9$ with different values for the least index $k$ with
$\lambda_k^*>0$. The corresponding results are summarized in Table
\ref{table: 127, 9, 4, k>0}. As the sample mean $\mu$ and the
asymptotic estimates ${\tt E}_g$ were close to zero, we considered
the absolute error
$$\varepsilon_1:={|\mu - {\tt E}_g|}.$$
%\begin{center}
%    \begin{table}[h]
%        \caption{Examples with $q=127$, $e=9$ and $d=4$.}
%            \label{table: 127, 9, 4, k>0}
%        \begin{tabular}{|c|c|c|c|c|c|c|c|c|c|}
%            \hline
%            $k$ & \!$\lambda_k$\!   & $\mu$ & ${\tt E}_g$ & $\beta$  & ${\tt P}_0$ &$\gamma$  & ${\tt P}_\mathcal{G}$  & $\varepsilon_1$ & $\varepsilon_2$ \\
%            \hline
%            2& $1$ &$0.000135$ & $0.000124$& $0.999932$ & $0.999938$ & $0.968791$ & $0.763779$ & $0.000011$ & $0.000006$ \\
%            \hline
%            2& $3$ & $0.000127$
%            &$0.000372$ & $0.999937$ & $0.999814$ &$0.968875$ & $0.763779$ & $0.000025$ & $0.000123$ \\
%            \hline
%            3& $1$& $6\cdot 10^{-6}$ & $1 \cdot 10^{-6}$ & $0.999999$ & $0.999999$ & $0.968915$ & $0.763779$& $0.000005$ & $0$\\
%            \hline
%            3& $3$& $6\cdot 10^{-6}$ & $4 \cdot 10^{-7}$ & $0.999999$& $0.999999$& $0.968716$ & $0.763779$& $0.000006$ & $0$ \\
%            \hline
%            4& $1$ & $0$ & $2 \cdot 10^{-9}$  & $1$ & $0.999999$ & $0.968926$ &$0.763779$ & $2\cdot 10^{-9}$ & $1 \cdot 10^{-6}$ \\
%            \hline
%        \end{tabular}
%    \end{table}
%\end{center}

\begin{center}
    \begin{table}[h]
        \caption{Examples with $q=127$, $e=9$ and $d=4$.}
        \label{table: 127, 9, 4, k>0}
        \begin{tabular}{|c|c|c|c|c|c|c|c|c|c|}
            \hline
            $k$ & \!$\lambda_k^{*}$\!   & $\mu$ & ${\tt E}_g$ & $\beta$  & ${\tt P}_0$ &$\gamma$  & ${\tt P}_\mathcal{G}$  & $\varepsilon_1$ & $\varepsilon_2$ \\
            \hline
            2& $1$ &$0.000135$ & $0.000124$& $0.999932$ & $0.999938$ & $0.968791$ & $0.763779$ & $0.000011$ & $0.000006$ \\
            \hline
            2& $2$ & $0.000262$
            &$0.000248$ & $0.999869$ & $0.999876$ &$0.968705$ & $0.763779$ & $0.000014$ & $0.000007$ \\
            \hline
            3& $1$& $0.000006$ & $0.000001$ & $0.999999$ & $0.999999$ & $0.968915$ & $0.763779$& $0.000005$ & $0$\\
            \hline
            4& $1$ & $0$ & $2 \cdot 10^{-9}$  & $1$ & $0.999999$ & $0.968926$ &$0.763779$ & $2\cdot 10^{-9}$ & $ 0.000001$ \\
            \hline
        \end{tabular}
    \end{table}
\end{center}

\subsection{Examples for $q=211$, $e=17$, $d=7$ with $\lambda_k^* >0$}
Now we report on simulations with random samples $\mathcal{S}$ of
10000000 polynomials $f\in\mathbb{F}_{211}[T]$ of degree at most
$d:=3$.  We considered polynomials $g\in\mathbb{F}_{211}[T] $ of
degree $e:=17$ having different values for the least index $k$ with
$\lambda_k^*>0$. As the sample mean $\mu$ and the asymptotic
estimates ${\tt E}_g$ were close to zero, we considered the absolute
error
$$\varepsilon_1:={|\mu - {\tt E}_g|}.$$
The results are listed in Table \ref{table: 211, 17, 7}.
%\begin{center}
%    \begin{table}[h]
%        \caption{Examples with $q=211$, $e=17$ and $d=7$.}
%            \label{table: 211, 17, 7}
%        \begin{tabular}{|c|c|c|c|c|c|c|c|c|c|}
%            \hline
%            $k$ & \!$\lambda_k$\!   & $\mu$ & ${\tt E}_g$ & $\beta$  & ${\tt P}_0$ &$\gamma$  & ${\tt P}_\mathcal{G}$  & $\varepsilon_1$ & $\varepsilon_2$ \\
%            \hline
%            2& $2$ &$0.000087$ & $0.000008$  & $0.999956$ & $0.999955$  & $0.967239$ & $0.535545$ & $0.000079$& $1 \cdot 10^{-6}$ \\
%            \hline
%            2& $1$ &$0.000042$ & $0.000004$& $0.999979$ & $0.999978$& $0.967284$ & $0.535545$& $0.000038$& $0.000001$ \\
%            \hline
%            3&$1$ & $0$ & $3 \cdot 10^{-7}$& $1$ & $0.999999$ & $0.967244$ &$0.535545$ & $3 \cdot 10^{-7}$  & $1 \cdot 10^{-6}$ \\
%            \hline
%            3& $2$ & $0$&  $6 \cdot 10^{-7}$& $1$& $0.999999$ & $0.967329$ &$0.535545$ & $6 \cdot 10^{-7}$& $1 \cdot 10^{-6}$\\
%            \hline
%%           4&$2$ & $0$ & &$1$ & & $0.96728390$& & & \\
%%           \hline
%%           5& $1$ & $0$ & & $1$& & $0.96728230$ & & & \\
%%           \hline
%%
%
%        \end{tabular}
%    \end{table}
%\end{center}

\begin{center}
    \begin{table}[h]
        \caption{Examples with $q=211$, $e=17$ and $d=7$.}
        \label{table: 211, 17, 7}
        \begin{tabular}{|c|c|c|c|c|c|c|c|c|c|}
            \hline
            $k$ & \!$\lambda_k^{*}$\!   & $\mu$ & ${\tt E}_g$ & $\beta$  & ${\tt P}_0$ &$\gamma$  & ${\tt P}_\mathcal{G}$  & $\varepsilon_1$ & $\varepsilon_2$ \\
            \hline
            2& $2$ &$0.000087$ & $0.000089$  & $0.999956$ & $0.999955$  & $0.967239$ & $0.535545$ & $0.000002$& $ 0.000001$ \\
            \hline
            2& $1$ &$0.000042$ & $0.000045$& $0.999979$ & $0.999978$& $0.967284$ & $0.535545$& $0.000003$& $0.000001$ \\
            \hline
            3&$1$ & $0$ & $3 \cdot 10^{-7}$& $1$ & $0.999999$ & $0.967244$ &$0.535545$ & $3 \cdot 10^{-7}$  & $ 0.000001$ \\
            \hline
            3& $2$ & $0$&  $6 \cdot 10^{-7}$& $1$& $0.999999$ & $0.967236$ &$0.535545$ & $6 \cdot 10^{-7}$& $ 0.000001$\\
            \hline
            %           4&$2$ & $0$ & &$1$ & & $0.96728390$& & & \\
            %           \hline
            %           5& $1$ & $0$ & & $1$& & $0.96728230$ & & & \\
            %           \hline
            %

        \end{tabular}
    \end{table}
\end{center}

\subsection{Examples for $q=409$, $e=9$, $d=4$ with $\lambda_k^* >0$}
The last family of examples involves random samples $\mathcal{S}$ of
10000000 polynomials $f\in\mathbb{F}_{409}[T]$ of degree at most
$d:=4$. We considered polynomials $g\in\mathbb{F}_{409}[T]$ of
degree $e:=9$ having different values for the least index $k$ with
$\lambda_k^*>0 $. As the sample mean $\mu$ and the asymptotic
estimates ${\tt E}_g$ were close to zero, we considered the absolute
error
$$\varepsilon_1:={|\mu - {\tt E}_g|}.$$
The results are exhibited in Table \ref{table: 409, 9, 4, k>0}.

%\begin{center}
%    \begin{table}[h]
%        \caption{Examples with $q=409$, $e=9$ and $d=4$.}
%                    \label{table: 409, 9, 4, k>0}
%        \begin{tabular}{|c|c|c|c|c|c|c|c|c|c|}
%            \hline
%            $k$ &\!$\lambda_k$\!  & $\mu$ & ${\tt E}_g$ & $\beta$  & ${\tt P}_0$ &$\gamma$  & ${\tt P}_\mathcal{G}$  & $\varepsilon_1$ & $\varepsilon_2$ \\
%            \hline
%            2& $3$ & $0.000026$ & $0.000004$ & $0.999987$ & $0.999982$ & $0.931219$&$0.926650$ & $0.000022$& $0.000005$ \\
%            \hline
%            2&  1& $0.000015$ & $0.000002$& $0.999993$&  $0.999994$& $0.990212$&$0.926650$ & $0.000013$& $0.000001$ \\
%            \hline
%            3& $3$ & $0$ & $1 \cdot 10^{-8}$ & $1$ & $0.999999$ & $0.946044$ &$0.926650$ & $1 \cdot 10^{-8}$ & $0.000001$ \\
%            \hline
%            3& $1$ & $0$ & $ 4 \cdot 10^{-8}$ & $1$ & $0.999999$ & $0.946146$ &$0.926650$ & $ 4 \cdot 10^{-8}$ & $0.000001$ \\
%            \hline
%
%
%        \end{tabular}
%
%
%
%
%
%    \end{table}
%
%\end{center}

\begin{center}
    \begin{table}[h]
        \caption{Examples with $q=409$, $e=9$ and $d=4$.}
        \label{table: 409, 9, 4, k>0}
        \begin{tabular}{|c|c|c|c|c|c|c|c|c|c|}
            \hline
            $k$ &\!$\lambda_k^{*}$\!  & $\mu$ & ${\tt E}_g$ & $\beta$  & ${\tt P}_0$ &$\gamma$  & ${\tt P}_\mathcal{G}$  & $\varepsilon_1$ & $\varepsilon_2$ \\
            \hline
            2& $2$ & $0.000026$ & $0.000024$ & $0.999987$ & $0.999988$ & $0.931219$&$0.926650$ & $0.000002$& $0.000001$ \\
            \hline
            2&  1& $0.000015$ & $0.000002$& $0.999993$&  $0.999994$& $0.990212$&$0.926650$ & $0.000013$& $0.000001$ \\
            \hline
            3& $2$ & $0$ & $8 \cdot 10^{-8}$ & $1$ & $0.999999$ & $0.946044$ &$0.926650$ & $8 \cdot 10^{-8}$ & $0.000001$ \\
            \hline
            3& $1$ & $0$ & $ 4 \cdot 10^{-8}$ & $1$ & $0.999999$ & $0.946146$ &$0.926650$ & $ 4 \cdot 10^{-8}$ & $0.000001$ \\
            \hline

        \end{tabular}

    \end{table}

\end{center}

\subsection{Conclusions}
Summarizing, the results of Tables \ref{table: 67, 7,
3}--\ref{table: 409, 9, 4, k>0} show that the numerical experiments
we performed behave as predicted by the asymptotic estimates of
Theorems \ref{teo: average degree mcd} and \ref{teo: prob coprime}.
On the other hand, it seems that the estimate on the number generic
polynomials of Proposition \ref{prop: lower bound generic pols} is
somewhat pessimistic. Our numerical experiments suggest that the
number of generic polynomials depends on the factorization pattern
of $g$, while the lower bound of Proposition \ref{prop: lower bound
generic pols} depends only on $q$, $e$ and $d$.

\bibliographystyle{amsalpha}

\bibliography{refs1,finite_fields,polyhedral}

\end{document}